\author{Benjamin B. McMillan}
\title{}
\begin{document}

\thanks{This work was supported by the Institute for Basic Science (IBS-R032-D1).}
\subjclass[2020]{57R30, 57R32 (Primary)}
\keywords{Foliations, Haefliger Structures, Surgery, Haefliger classifying spaces}
\address{Center For Complex Geometry, Institute for Basic Science, 55 Expo-ro, Yuseong-gu 34126 Daejeon, South Korea}
\email{mcmillan@ibs.re.kr}

\title{Foliation surgeries and the concordance groups of foliated spheres}

\begin{abstract}
  We define a general procedure extending surgery to manifolds with foliation or Haefliger structure. We find a single obstruction to foliation surgery along an attaching sphere. When unobstructed, the surgery can be chosen to preserve characteristic numbers.

  Studying these obstructions, we obtain two results. First, on every stably trivial manifold of dimension \( n \le 2q+2 \), a transversely framed codimension-\( q \) Haefliger structure can be surgered to a Haefliger structure on the sphere \( S^{n} \), characteristic numbers unchanged.
  As an application, we modify a construction of Thurston's to give explicit Haefliger structures on \( S^{2q+1} \) whose Godbillon-Vey numbers surject to the real line.
  Second, the foliation connected sum gives, for each dimension \( n \) and codimension \( q \), a group structure on concordance classes of transversely oriented Haefliger structures on \( S^{n} \). These groups may be identified with the homotopy groups of the Haefliger classifying spaces \( B\Gamma^{+}_{q} \).
 
\end{abstract}

\maketitle
\tableofcontents

\section{Introduction}
\subsection{Foliation surgery}
Surgery of smooth manifolds along spheres is among the most fundamental tools used to study the differential topology of manifolds.
With the intention of studying the differential topology of foliations, it is the aim of this paper to define a general procedure extending surgery to manifolds carrying a foliation, or more generally a Haefliger structure, obtaining a new Haefliger structure on the surgered manifold.
The surgered foliations so obtained can be well controlled, having, for example, the desirable property that their characteristic numbers are equal to those of the original.


Recall that given a smooth closed manifold \( M \), and an embedded attaching sphere \( \varphi_{0} \colon S^{k} \to M \) with trivialized normal bundle, the surgery along \( \varphi_{0} \) is a new manifold \( M' \) with slightly modified topology.
It furthermore holds that the union \( M \cup M' \) may be realized as the boundary of a simple cobordism \( W \), the trace of the surgery.
A key feature is that the manifolds \( M \) and \( M' \) can be identified outside of the domain of surgery.
As such, if \( M \) has a Haefliger structure \( \F \), one may ask whether \( M' \) admits a Haefliger structure \( \F' \) that agrees with \( \F \) outside of the domain of surgery.
In case that such an \( \F' \) exists, we say that it is a \emph{weak foliation surgery}.
We say that \( \F' \) is a \emph{strong foliation surgery} if there furthermore exists an extension of the boundary foliation \( (M, \F) \cup (M', \F') \) to all of \( W \).
The strong surgeries have the property that for any foliation characteristic class of degree equal the dimension of \( M \), the associated characteristic numbers of \( (M, \F) \) and \( (M', \F') \) are equal.

\subsection{The concordance class group}

In the case of surgery along a \( 0 \)-sphere, one can always construct a strong foliation surgery (cf. Proposition \ref{thm: prop 0 surgeries exist}).
It furthermore holds that after a choice of transverse orientation on \( \F \), the operation is well defined up to concordance (Lemma \ref{thm: lem 0-surgery well defined}).
This allows the straightforward definition, for each codimension \( q \) and round sphere \( S^{k} \), of a group 
\[ \Theta^{k}_{\Gamma^{+}_{q}} := \operatorname{Fol}^{+}_{q}(S^{k}) , \]
the set of concordance classes of transversely oriented codimension-\( q \) Haefliger structures on \( S^{k} \), with group law given by foliation connected sum (a special case of foliation \( 0 \)-surgery).
The group so obtained is naturally identified with the fundamental group \( \pi_{k}(B\Gamma^{+}_{q}) \) of the Haefliger classifying space for transversely oriented codimension-\( q \) foliations, which is of clear interest in the theory of foliations.
This identification has useful immediate consequences, and is essential to our analysis of foliation surgeries on attaching spheres of positive dimension.

\subsection{Obstructions to foliation surgery}

The situation is more interesting on the attaching spheres \( \varphi_{0} \colon S^{k} \to M \) of positive dimension, as one cannot in all cases obtain a foliation surgery; the restriction of \( \F \) off of the domain of surgery may not be extendable to any foliation \( \F' \) on \( M' \).
The obstruction to doing so is provided by the pullback of \( \F \) to \( S^{k} \) along \( \varphi_{0} \), or more strictly speaking, its concordance class.
Indeed, it is not difficult to see that if a weak surgery exists, then \( \varphi_{0}^{*}\F \) is in the trivial concordance class of foliations on \( S^{k} \).
We show in Theorem \ref{thm: unobstructed foliation surgery exists} that this is the only obstruction: if \( \varphi_{0}^{*}\F \) is concordant to the trivial Haefliger structure, then a strong foliation surgery along \( \varphi_{0} \) can be made.

These obstructions can be handled in some cases, and in particular for spheres of small dimension \( k \) relative to the foliation codimension \( q \).
To describe this, we pause to recall that the classifying space \( B\Gamma_{q} \) for codimension-\( q \) Haefliger structures \cite{Haefliger1970--FeuilletagesSurLesVarietesOuvertes} is related to the classifying space of vector bundles by the universal normal bundle map \( \nu \colon B\Gamma_{q} \to \GL_{q} \), composition with which sends the classifying map of a Haefliger structure to the classifying map of its normal bundle.
By taking the homotopy fiber of \( \nu \), one obtains the space \( F\Gamma_{q} \), which classifies Haefliger structures with trivial normal bundle.
These spaces can be fit into the fibration sequence
\[ F\Gamma_{q} \to B\Gamma_{q} \to \BGL_{q} , \]
and for transversely oriented foliations, is the fibration sequence
\[ F\Gamma_{q} \to B\Gamma^{+}_{q} \to \BGL^{+}_{q} . \]
It is well known, by the Mather-Thurston Theorem \cite{Thurston1974--FoliationsGroupsDiffeomorphisms}, that \( F\Gamma_{q} \) is \( (q+1) \)-connected, and so from the fibration sequence that \( \pi_{k}(B\Gamma^{+}_{q}) = \pi_{k}(\BGL^{+}_{q}) \) for \( k \le q+1 \).
Interpreted as a statement about Haefliger structures on \( S^{k} \), this means that the normal bundle is the precise obstruction to being null-concordant for these \( k \).

\subsection{Surgeries to the sphere}

As a consequence of this observation, if one starts with a codimension-\( q \) Haefliger structure \( (M, \F) \) with trivial normal bundle, then any attaching sphere of dimension up to \( q+1 \) has vanishing foliation surgery obstruction.
We furthermore show in Theorem \ref{thm: unobstructed foliation surgery exists} that the foliation surgery in this case can be chosen so that the surgered Haefliger structure \( (M',\F') \) has again trivial normal bundle.
This sets us up nicely to make inductive sequences of foliation surgeries, removing much of the lower dimensional homotopy groups of \( M \) in the process.
In the particular case of a closed manifold with stably trivial tangent bundle, it is known \cites{Milnor1961--ProcedureKillingHomotopyGroupsDifferentiableManifolds, Milnor1960--CobordismRingOmegaComplexAnalogue, Wall1959--NoteCobordismRing} that one can find a sequence of manifold surgeries that inductively reduce the homotopy groups, the surgery sequence terminating at the sphere.
By repeatedly applying Theorem \ref{thm: unobstructed foliation surgery exists} to this sequence of surgeries, we have thus the following (compare Theorem \ref{thm: foliations surgerable to spheres}).
\begin{theorem}\label{thm: thm intro surgery to sphere}
  Given a stably trivial manifold \( M \) of dimension \( n \) at most \( 2q+2 \), and a codimension-\( q \) Haefliger structure \( (M, \F) \) with trivial normal bundle, there exists a sequence of strong foliation surgeries to the sphere \( (S^{n}, \F') \).
  The characteristic numbers of \( \F \) and \( \F' \) agree.
\end{theorem}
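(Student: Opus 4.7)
The plan is to build the sequence inductively by splicing together the classical Milnor-Wall construction of a surgery sequence terminating at the standard sphere with Theorem \ref{thm: unobstructed foliation surgery exists}. The hypothesis \( n \le 2q+2 \) is chosen precisely so that every attaching sphere appearing in a middle-dimensional surgery procedure has dimension at most \( q+1 \), which is the range in which the Mather-Thurston connectivity of \( F\Gamma_{q} \) trivializes the foliation surgery obstruction.

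First I would invoke the classical surgery theorem of \cites{Milnor1961--ProcedureKillingHomotopyGroupsDifferentiableManifolds, Milnor1960--CobordismRingOmegaComplexAnalogue, Wall1959--NoteCobordismRing}: a stably trivialized closed manifold \( M^{n} \) admits a sequence of framed surgeries \( M = M_{0} \rightsquigarrow M_{1} \rightsquigarrow \dots \rightsquigarrow M_{N} = S^{n} \), with each step carried out along an attaching sphere of dimension \( k_{i} \le \lfloor n/2 \rfloor \). Under the assumption \( n \le 2q+2 \), every \( k_{i} \) satisfies \( k_{i} \le q+1 \).

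Next I would run the induction. Suppose \( (M_{i}, \F_{i}) \) has been produced with \( \F_{i} \) a codimension-\( q \) Haefliger structure with trivial normal bundle. Given the next attaching sphere \( \varphi_{0} \colon S^{k_{i}} \to M_{i} \), the pullback \( \varphi_{0}^{*}\F_{i} \) is a codimension-\( q \) Haefliger structure on \( S^{k_{i}} \) with trivial normal bundle. Its classifying map into \( B\Gamma_{q} \) lifts, up to the choice of trivialization, through the homotopy fiber \( F\Gamma_{q} \to B\Gamma_{q} \to \BGL_{q} \). Since \( F\Gamma_{q} \) is \( (q+1) \)-connected by Mather-Thurston and \( k_{i} \le q+1 \), this lift is null-homotopic, so \( \varphi_{0}^{*}\F_{i} \) lies in the trivial concordance class. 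Theorem \ref{thm: unobstructed foliation surgery exists} then yields a strong foliation surgery \( (M_{i+1}, \F_{i+1}) \); invoking the ``trivial normal bundle'' clause of that theorem, \( \F_{i+1} \) may be chosen to have trivial normal bundle, preserving the inductive hypothesis, while the strong property keeps characteristic numbers unchanged. Iterating \( N \) times delivers the required \( (S^{n}, \F') \), and telescoping the equalities of characteristic numbers at each stage gives the final assertion.

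The main obstacle is keeping the inductive hypothesis alive from step to step: one must simultaneously use triviality of the normal bundle of \( \F_{i} \) to feed into the Mather-Thurston connectivity argument and vanish the obstruction, and then produce \( \F_{i+1} \) with again-trivial normal bundle so that the induction may continue. Both are handled by the full strength of Theorem \ref{thm: unobstructed foliation surgery exists}. The only auxiliary combinatorial check is the dimensional accounting \( \lfloor n/2 \rfloor \le q+1 \) under \( n \le 2q+2 \), which forces all attaching spheres into the Mather-Thurston range; this is where the dimension hypothesis is spent, and it is the only place it enters.
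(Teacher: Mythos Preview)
Your proposal is correct and essentially identical to the paper's own argument (Theorem \ref{thm: foliations surgerable to spheres}): invoke the Milnor--Wall surgery sequence to the sphere using attaching spheres of dimension at most \( \lfloor n/2 \rfloor \le q+1 \), then at each step use triviality of the normal bundle together with the \((q+1)\)-connectivity of \( F\Gamma_{q} \) to kill the obstruction class and apply Theorem \ref{thm: unobstructed foliation surgery exists}, choosing the surgered Haefliger structure to again have trivial normal bundle so the induction continues. The preservation of characteristic numbers then follows from Proposition \ref{thm: prop lax surgery preserves char numbers} at each step, exactly as you note.
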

Theorem \ref{thm: thm intro surgery to sphere} can be interpreted as a statement about the topology of \( F\Gamma_{q} \), as follows.
A trivially-normal Haefliger structure \( (M, \F) \) as in the Theorem represents a homology class 
\[ h_{*}([M]) \in H_{n}(F\Gamma_{q}) , \]
the pushforward of the fundamental class of \( M \) by the classifying map of \( \F \),
\[ h \colon M \to F\Gamma_{q} . \]
In the case that \( M \) is stably trivial, the associated foliation \( (S^{n}, \F') \) guaranteed by the Theorem determines a homotopy class
\[ [h' \colon S^{n} \to F\Gamma_{q}] \in \pi_{n}(F\Gamma_{q}) .\]
In fact, for non-torsion homology classes (such as those detected by foliation characteristic classes), the requirement that \( M \) be stably trivial is not crucial, because \( M \) is dominated by some stably trivial manifold \( \hat{M} \).
By using a smooth positive degree map \( f \colon \hat{M} \to M \) to pull back \( \F \), one obtains a foliation \( \hat{\F} \) on \( \hat{M} \) with all characteristic numbers scaled by the degree of \( f \).
The upshot is that if two homology classes in \( H_{n}(F\Gamma_{q}) \), say represented by \( (M, \F_{1}), (M, \F_{2}) \), are distinguished by characteristic classes, then so are \( (\hat{M}, \hat{\F}_{1}), (\hat{M}, \hat{\F}_{2}) \), and applying the Theorem, so are the associated elements of \( \pi_{n}(F\Gamma_{q}) \).

\subsection{Application to independently varying characteristic classes}

This conclusion is particularly interesting in dimension \( 2q+1 \), where large families of non-concordant foliations, and hence homology classes in \( B\Gamma_{q} \), are known to exist.
These homology classes are distinguished by evaluation against degree-\( (2q+1) \) Gel'fand-Fuks characteristic classes, which define surjective maps 
\begin{equation}\label{eq: characteristic map surjection}
  H_{2q+1}(B\Gamma_{q}) \to V_{q}
\end{equation}
to positive dimensional vector spaces \( V_{q} \).

The first example of such continuously varying foliation characteristic classes was Thurston's \cite{Thurston1972--NoncobordantFoliationsS3}, where he constructs codimension-\( 1 \) foliations on \( S^{3} \) whose Godbillon-Vey numbers give a surjection to \( \R \).
Because they are defined on the sphere, these foliations give the stronger conclusion, a surjection
\[ \pi_{3}(F\Gamma_{1}) \to \R . \]
Many examples of continuous variation were constructed later, by authors such as Kamber and Tondeur \cite{Kamber-Tondeur1974--CharacteristicInvariantsFoliatedBundles}, Heitsch \cite{Heitsch1978--IndependentVariationSecondaryClasses}, and Rasmussen \cite{Rasmussen1980--ContinuousVariationFoliationsCodimensionTwo}, demonstrating surjections like Equation \eqref{eq: characteristic map surjection} in various homology degrees starting at \( 2q+1 \).
Notably, apart from Thurston's original examples, the foliations constructed for this purpose are on manifolds not the sphere, and it is interesting to ask what classes are spherical, or equivalently, when continuously varying foliations can be constructed on spheres.


This question of spherical realization was raised by Hurder in \cite{Hurder1985--ClassifyingSpaceSmoothFoliations}.
In that paper, he provides a positive answer in many cases by using CW- and rational homotopy theory techniques.
He shows in particular that the surjection in Equation \eqref{eq: characteristic map surjection} can be restricted to spherical homology classes, to obtain a surjection
\[ \pi_{2q+1}(B\Gamma^{+}_{q}) \to V_{q} . \]
In other words, there exist families of foliations on \( S^{2q+1} \) that exhibit the independent variation of Gel'fand-Fuks classes shown by previous authors.

We obtain a result similar to Hurder's, for homology classes in \( H_{2q+1}(F\Gamma_{q}) \), as an easy corollary of Theorem \ref{thm: thm intro surgery to sphere} (cf. Corollary \ref{thm: cor continuously varying classes are spherical}).
The construction here is arguably more explicit, depending only on a finite sequence of foliation surgeries for any given homology class in \( H_{2q+1}(F\Gamma_{q}) \).
It also has the advantage that we stay in the smooth category throughout;
if a Haefliger structure \( (M, \F) \) is compatible with the smooth structure of \( M \), in the sense that its Haefliger charts \( M \supseteq U \to \R^{q} \) are smooth functions,\footnote{We work here with smooth Haefliger structures, meaning that the transition functions between charts are smooth, while the charts themselves need only be continuous.} then unobstructed foliation surgeries \( (M', \F') \) can be taken to be compatible with the smooth structure of \( M' \).

\subsection{An explicit application}

In addition to the general argument just described, it is good to have fully explicit examples of foliations to which the theory applies.
To this end, we recall the construction of Thurston \cite{Mizutani2017--ThurstonsConstructionSurjectiveHomomorphismH}, in each codimension \( q \), of foliations whose Godbillon-Vey numbers surject onto \( \R \).
In fact, these are each on a stably trivial \( (2q+1) \)-manifold, but unfortunately, do not have trivial normal bundle.
We explain in Section \ref{sec: Thurston's construction} a modification of the construction that does have trivial normal bundle, so that Theorem \ref{thm: thm intro surgery to sphere} may be applied.
In doing so, we obtain a quite explicit family of continuously varying Godbillon-Vey spheres for each codimension, in Corollary \ref{thm: cor GV spheres}.
To our knowledge, no such explicit construction of Godbillon-Vey spheres has yet appeared in the literature.

\subsection{Previous notions}

Finally, we remark that the idea of foliation surgeries is clearly a natural one, and other authors have made similar constructions in special cases.
For example, Miyoshi in \cite{Miyoshi1982--FoliatedRoundSurgeryCodimensionOneFoliatedManifolds} defines \emph{foliated round surgeries}, for regular codimension-\( 1 \) foliations.
However, the requirement there of preserving regularity of foliation (and in particular, vanishing Euler number of the underlying manifold), means that the underlying manifold surgeries are done on a more complicated attaching regions than spherical surgeries.
There is also the work of Hashiguchi \cite{Hashiguchi2004--SurgeryFormulaDiscreteGodbillonVey}, where regular codimension \( 1 \) foliations on \( 3 \)-manifolds are carried through Dehn surgeries.

The major difference between these approaches and the one taken here is that we make no constraint against singularities.
Giving up on regularity has multiple advantages, both allowing surgeries that would not otherwise be possible, and allowing for a simpler theory.
Indeed, many of the spherical surgeries here have topological obstructions to being regular, and in full generality, singularities are inevitable.
On the other hand, embracing the singularities allowed by Haefliger structures gives substantial flexibility, and the constructions here would be much more intricate if regularity were enforced.
The philosophy here is to do surgery whenever possible, and then in cases where a regularization is not obstructed, the classical results of Thurston \cites{Thurston1976--ExistenceCodimensionOneFoliations, Thurston1974--TheoryFoliationsCodimensionGreaterOne} allow one to argue that the Haefliger structure can be modified to a regular foliation.

\subsection{Acknowledgements}
The author would like to acknowledge Lachlan MacDonald and Michael Albanese for fruitful discussions on the material of this paper.

\section{Basic Notions}\label{sec: basic notions}
\subsection{Haefliger structures}
Although we assume familiarity with the theory of foliations, we begin by recalling some basics to fix notation.
Let \( \Gamma_{q} \) denote the topological groupoid of germs of local \( \mathcal{C}^{\infty} \)-diffeomorphisms of \( \R^{q} \), and \( \Gamma^{+}_{q} \) the subgroupoid of orientation preserving germs.
An element of \( \Gamma_{q} \) is a germ of diffeomorphism \( \gamma \) at source point \( s(\gamma) \in \R^{q} \) and with target point \( t(\gamma) := \gamma(s(\gamma)) \).
The so defined maps \( s,t \colon \Gamma_{q} \to \R^{q} \) are continuous when \( \Gamma_{q} \) is given the \'etale sheaf topology.

\begin{definition}[Haefliger \cite{Haefliger1970--FeuilletagesSurLesVarietesOuvertes}]\label{def: Haefliger structures}
  A codimension-\( q \) \emph{Haefliger structure} or \emph{foliation} \( (M,\F) \) on a topological space \( M \) is the data of an open covering \( \{U_{\alpha}\}_{\alpha \in \mathcal{A}} \) of \( M \), continuous \emph{charts} \( \{f_{\alpha} \colon U_{\alpha} \to \R^{q}\}_{\mathcal{A}} \), and continuous \emph{transition functions} \( \{\gamma_{\beta\alpha} \colon U_{\alpha} \cap U_{\beta} \to \Gamma_{q}\}_{(\alpha,\beta) \in \mathcal{A}^{2}} \).
  This data is assumed to satisfy the following two conditions.
  \begin{enumerate}
  \item
    For each \( x \in U_{\alpha} \cap U_{\beta} \), the relation
    \[ f_{\beta} = \gamma_{\beta\alpha}(x) \circ f_{\alpha} \]
    holds when restricted to sufficiently small neighborhood of \( x \) in \( M \).
  \item
    For each \( x \in U_{\alpha} \cap U_{\beta} \cap U_{\delta} \), and restricted to sufficiently small neighborhood of \( f_{\alpha}(x) \) in \( \R^{q} \), it holds that
    \[ \gamma_{\delta\alpha}(x) = \gamma_{\delta\beta}(x) \circ \gamma_{\beta\alpha}(x) . \]
  \end{enumerate}
  We don't assume maximal atlases; rather, there is the partial order on atlases by refinement, and two Haefliger structures are \emph{equivalent} if they admit a common refinement.

  A Haefliger structure is \emph{regular} if \( M \) is a smooth manifold and all charts are smooth submersions.
\end{definition}

In contrast to some conventions, we will use the term \emph{foliation} interchangeably with the term \emph{Haefliger structure}, so that foliation does not imply regularity.
This is because singularities will be unavoidable in our setting.
Note that for a Haefliger structure \( (M, \F) \), the codimension \( q \) may equal or exceed the dimension of a manifold \( M \).
In particular, the case of a codimension-\( q \) foliation on a manifold of dimension \( q \) may be referred to as a \emph{point foliation}.
For example, the charts of a smooth \( q \)-manifold structure on \( M \) can be regarded as Haefliger charts, comprising a regular point foliation of \( M \) (and conversely).

Given a Haefliger structure \( (M, \F) \) and a continuous map \( f \colon N \to M \), the data of \( \F \) pull back, to give the \emph{pullback Haefliger structure} \( (N, f^{*}\F) \).
In particular, when \( f \) is an inclusion, we may refer to \( f^{*}\F \) as the \emph{restriction to \( N \)} and reuse the notation \( \F \).

\subsection{Concordance classes}
We are primarily interested in foliations up to the following equivalence relation.
\begin{definition}
  A \emph{concordance} between Haefliger structures \( (M, \F_{0}) \) and \( (M, \F_{1}) \) is a Haefliger structure \( \F \) on \( M \times [0,1] \) whose restriction to \( M \times \{i\} \) is equivalent to \( \F_{i} \) for \( i = 0, 1 \).
  Concordance determines an equivalence relation on Haefliger structures, and any two are \emph{concordant} if they are in the same equivalence class.
  In particular, the trivial concordance class is the concordance class containing the \emph{trivial Haefliger structures}, given by any choice of a global Haefliger chart \( f \colon M \to \R^{q} \).
\end{definition}


Before stating the next Lemma, we remark that not all foliations in the trivial concordance class can be covered by a single Haefliger chart;
the regular point foliation of the circle provides such an example.
\begin{lemma}
  A Haefliger structure \( (S^{k}, \F) \) is in the trivial concordance class if and only if it admits an extension \( (D^{k+1}, \tilde{\F}) \).
\end{lemma}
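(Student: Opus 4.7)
The plan is to prove both implications by working in radial coordinates on the disk, using that the contractibility of the disk controls the normal data.

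For the ``if'' direction, suppose \( (D^{k+1}, \tilde{\F}) \) extends \( (S^{k}, \F) \). Pick any interior point \( p \in D^{k+1} \) and a Haefliger chart \( f_{\alpha} \colon U_{\alpha} \to \R^{q} \) of \( \tilde{\F} \) containing \( p \). Choose a small closed round ball \( B \subset U_{\alpha} \) centered at \( p \), and identify the complementary region \( D^{k+1} \setminus \operatorname{int}(B) \) with the cylinder \( S^{k} \times [0,1] \) by radial coordinates, so that \( \partial D^{k+1} \) corresponds to \( S^{k} \times \{1\} \) and \( \partial B \) to \( S^{k} \times \{0\} \). The restriction of \( \tilde{\F} \) to this cylinder is then a concordance. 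Its slice at \( t = 1 \) is \( \F \), while its slice at \( t = 0 \) is the restriction of \( \tilde{\F} \) to \( \partial B \subset U_{\alpha} \), which is covered by the single chart \( f_{\alpha}|_{\partial B} \) and is therefore a trivial Haefliger structure. Hence \( \F \) is concordant to a trivial Haefliger structure.

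For the ``only if'' direction, suppose we are given a concordance \( \mathcal{G} \) on \( S^{k} \times [0,1] \) with \( \mathcal{G}|_{S^{k} \times \{1\}} = \F \) and \( \mathcal{G}|_{S^{k} \times \{0\}} \) equivalent to a trivial Haefliger structure \( \F_{0} \) given by a global chart \( f \colon S^{k} \to \R^{q} \). A standard collaring argument allows us to assume, after modifying \( \mathcal{G} \) within its concordance class, that on some collar \( S^{k} \times [0, \epsilon) \) the structure \( \mathcal{G} \) is the pullback of \( \F_{0} \) by projection to the first factor, covered by the single chart \( (x,t) \mapsto f(x) \). Now decompose \( D^{k+1} = B_{1/2} \cup A \), where \( B_{1/2} \) is the closed disk of radius \( 1/2 \) and \( A \cong S^{k} \times [1/2, 1] \) is the annular region in radial coordinates. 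Place \( \mathcal{G} \) (reparametrized) on \( A \). Since \( B_{1/2} \) is contractible, the continuous map \( f \colon \partial B_{1/2} = S^{k} \to \R^{q} \) extends to a continuous map \( \tilde{f} \colon B_{1/2} \to \R^{q} \), which we use as a single global chart on \( B_{1/2} \). On the collar of the interface \( S^{k} \times \{1/2\} \), the two definitions agree (both are trivial, given by \( f \) composed with projection), so their atlases combine to a Haefliger atlas \( \tilde{\F} \) on all of \( D^{k+1} \), restricting to \( \F \) on \( \partial D^{k+1} \).

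The only technical point requiring care is the collaring of concordances in the ``only if'' direction, so that the gluing on the interface \( S^{k} \times \{1/2\} \) is literal (i.e., the two atlases really coincide on an overlap neighborhood) rather than merely up to equivalence; this is a routine cylindering argument and presents no genuine obstacle.
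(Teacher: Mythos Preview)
Your proof is correct and follows essentially the same approach as the paper's: remove a small interior ball contained in a single chart to obtain a concordance to a trivial structure, and conversely, place the concordance on an outer annulus and extend the global chart across the inner disk. One small quibble: in the ``only if'' direction, the extension of \( f \colon S^{k} \to \R^{q} \) over the inner disk is guaranteed by the contractibility of the target \( \R^{q} \), not of \( B_{1/2} \).
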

\begin{proof}
  Given an extension \( (D^{k+1}, \tilde{\F}) \), fix an interior ball \( B \) that is sufficiently small enough to be contained in some chart of \( \tilde{\F} \).
  The restriction \( (D^{k+1} \backslash B, \tilde{\F}) \) is a concordance of \( \F \) to a trivial Haefliger structure.

  Conversely, if \( \F \) is in the trivial concordance class, a choice of concordance to trivial Haefliger structure determines a Haefliger structure on the annulus \( S^{k} \times [\tfrac{1}{2},1] \) that restricts to \( \F \) on \( S^{k} \times \{1\} \) and restricts to a trivial Haefliger structure \( f \colon N \to \R^{q} \) on a neighborhood \( N \) of \( S^{k} \times \{\tfrac{1}{2}\} \).
  By standard results, the function \( f \) can be extended smoothly to a function on \( N \cup \tfrac{1}{2}D^{k+1} \).
  This extension gives a Haefliger chart that may be freely added to those on \( S^{k} \times [\tfrac{1}{2}, 1] \) to define the desired extension of Haefliger structure to \( D^{k+1} \).
\end{proof}

\subsection{Classifying spaces and normal bundles}
As \( \Gamma_{q} \) is a topological groupoid, the usual constructions, such as Milnor's infinite join
\cite{Haefliger1971--HomotopyIntegrability}, can be applied to define a classifying space \( B\Gamma_{q} \).
Naturally, \( B\Gamma_{q} \) is characterized by the property that concordance classes of Haefliger structures on \( M \) are in bijection with homotopy classes of maps \( M \to B\Gamma_{q} \).
The space \( B\Gamma_{q} \) can be given a universal Haefliger structure so that the correspondence is given by sending a classifying map to its pullback of the universal structure.

This situation is of course analogous to the case of principal \( G \)-bundles, with \( G \) a topological group.
The primary difference is that concordance is a much stronger notion for principal \( G \)-bundles.
\begin{theorem}[Hussemoller \cite{Husemoller1994--FibreBundles}, Theorem 4.9.8]\label{Thm: thm bundle concordance implies isomorphism}
  Over a paracompact base, concordant principal \( G \)-bundles are isomorphic.
\end{theorem}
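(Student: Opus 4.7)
The plan is to reduce the theorem to the following key lemma: if $P \to B \times [0,1]$ is a principal $G$-bundle over a paracompact space $B$, then there is a bundle isomorphism $P \cong p^{*}(P|_{B \times \{0\}})$, where $p \colon B \times [0,1] \to B$ is the first projection. Granted this, a concordance between principal $G$-bundles $P_{0}$ and $P_{1}$ over a paracompact $M$ is by definition a principal $G$-bundle $P \to M \times [0,1]$ whose restrictions to the endpoints are $P_{0}$ and $P_{1}$. Since $M \times [0,1]$ is again paracompact, the lemma applied to $P$ gives $P \cong p^{*}P_{0}$; restricting to $M \times \{1\}$ then yields $P_{1} \cong P_{0}$.

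To prove the lemma, I would first establish a tube trivialization: for each $b \in B$ there is an open neighborhood $U \ni b$ over which $P|_{U \times [0,1]}$ is trivial. Using compactness of $[0,1]$, cover the slice $\{b\} \times [0,1]$ by finitely many local product trivializations, and after shrinking assume they take the form of slabs $U \times [t_{i-1}, t_{i}]$ over a common neighborhood $U$ of $b$. Consecutive slabs may then be merged by multiplying one trivialization by the transition gauge transformation defined on the overlap slice $U \times \{t_{i}\}$, iteratively producing a trivialization over the whole tube $U \times [0,1]$.

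Next, use paracompactness of $B$ to pass to a locally finite refinement $\{V_{\alpha}\}$ of the tube cover $\{U_{b}\}$, with a subordinate partition of unity $\{\phi_{\alpha}\}$. Each tube trivialization induces a canonical isomorphism between $P|_{V_{\alpha} \times [0,1]}$ and $(p^{*}P_{0})|_{V_{\alpha} \times [0,1]}$, and the global isomorphism $P \cong p^{*}P_{0}$ is built by patching these together. One well-orders the index set and, at stage $\alpha$, extends the previously constructed partial isomorphism by using a time reparameterization governed by $\phi_{\alpha}$, so that the partial isomorphism on a shrinking subtube coincides with the $\alpha$-th tube trivialization. Local finiteness ensures that near each point only finitely many stages contribute, so the construction stabilizes to a genuine bundle isomorphism.

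The main obstacle is this patching step. Unlike the vector bundle setting, where one can linearly combine local parallel transports weighted by the partition of unity, the fibers of a principal $G$-bundle are torsors rather than vector spaces, so the interpolation must instead proceed multiplicatively through the group structure. The standard resolution is the inductive time-reparameterization technique sketched above. Notably, it is precisely this step that has no analogue for Haefliger structures, which is what makes the weaker conclusion of concordance, rather than equivalence, unavoidable in the foliation setting and motivates the constructions of the present paper.
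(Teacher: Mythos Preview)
The paper does not give its own proof of this statement; it simply cites Husemoller's textbook (Theorem 4.9.8) and uses the result as a black box. So there is no proof in the paper to compare against.

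That said, your sketch is essentially the standard argument one finds in Husemoller: reduce to showing that a bundle over \(B \times [0,1]\) is pulled back from \(B \times \{0\}\), obtain tube trivializations using compactness of \([0,1]\), and patch via a partition of unity with a well-ordered time-reparameterization. The outline is correct and the remark at the end, that this patching step has no analogue for Haefliger structures, is exactly the point the paper is making when it contrasts this theorem with the situation for foliations. One small caution: your description of the patching (``extends the previously constructed partial isomorphism by using a time reparameterization governed by \(\phi_{\alpha}\)'') is quite compressed; if you were to write this out in full you would want to be explicit that at each stage you define a bundle map over the set where \(\sum_{\beta \le \alpha} \phi_{\beta} > 0\) by pushing along the fiber direction up to the partial sum, and check that local finiteness makes the transfinite process terminate pointwise. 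But as a sketch it is sound.
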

No such statement holds for concordant foliations, which may look quite different from each other.

The normal bundle of a Haefliger structure is straightforward from the perspective of classifying spaces.
There is a continuous groupoid map to the general linear group,
\[ \Gamma_{q} \to \GL_{q} , \]
which sends a germ \( \gamma \) to its differential \( \d\!\gamma \) at \( s(\gamma) \), the differential identified as an element of \( \GL_{q} = \GL(T_{0} \R^{q}) \) using the underlying vector space structure of \( \R^{q} \).
By functoriality of the classifying space construction, one has the induced map
\[ \nu \colon B\Gamma_{q} \to \BGL_{q} , \]
which captures reduction to the normal bundle.
\begin{definition}
  The \emph{normal bundle} of a Haefliger structure \( (M, \F) \) classified by \( M \to B\Gamma_{q} \) is the bundle classified by the composition 
  \[ M \to B\Gamma_{q} \to \BGL_{q} . \]
\end{definition}
Of course, in case of a regular foliation, this agrees with the definition of normal bundle as the quotient \( TM / T \F \) by tangents to leaves.
In any case, one may say that a foliation is \emph{transversely oriented} when the normal bundle is oriented, meaning that the classifying map takes values in \( \BGL^{+}_{q} \subset \BGL_{q} \), where \( \GL^{+}_{q} \) is the subgroup of orientation preserving elements in \( \GL_{q} \).
The preimage 
\[ B \Gamma^{+}_{q} = \nu^{-1}(\BGL^{+}_{q}) \subset B \Gamma_{q} \]
classifies the transversely oriented foliations.

The homotopy fiber of the map \( \nu \), denoted \( F\Gamma_{q} \), classifies the Haefliger structures with trivial normal bundle.
A crucial result is the following, due to Mather and then Thurston \cite{Thurston1974--FoliationsGroupsDiffeomorphisms}.
\begin{theorem}\label{thm: Mather-Thurston}
  \( F\Gamma_{q} \) is \( (q+1) \)-connected.
\end{theorem}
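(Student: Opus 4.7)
The strategy is to identify the higher loop spaces of \( F\Gamma_{q} \) with classifying spaces of discretized diffeomorphism groups, reducing the connectivity statement to a low-degree group homology computation. The sharp form of the Mather-Thurston theorem, which is the engine behind the stated connectivity, provides a weak equivalence
\[ \Omega^{q} F\Gamma_{q} \simeq B \mathrm{Diff}^{\delta}_{c}(\R^{q}), \]
where \( \mathrm{Diff}^{\delta}_{c}(\R^{q}) \) denotes the group of compactly supported diffeomorphisms of \( \R^{q} \) equipped with the discrete topology.  Under this equivalence, \( \pi_{k+q}(F\Gamma_{q}) \cong \pi_{k}(B \mathrm{Diff}^{\delta}_{c}(\R^{q})) \), so the connectivity statement translates to a statement about low-dimensional homotopy of the classifying space of a discrete group.

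I would handle the low-dimensional range \( k \le q \) first, by a direct argument that avoids the full Mather-Thurston machinery. A class in \( \pi_{k}(F\Gamma_{q}) \) is represented by a Haefliger structure \( (S^{k}, \F) \) together with a trivialization of its normal bundle. Since the codimension is at least the dimension of the sphere, the trivialization of the normal bundle allows one, after a concordance, to represent \( \F \) by a single global chart \( S^{k} \to \R^{q} \). Contractibility of \( \R^{q} \) provides an extension of this chart across \( D^{k+1} \), and the preceding lemma then exhibits the desired null-concordance.

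The crucial case is \( k = q+1 \), where the Mather-Thurston identification yields \( \pi_{q+1}(F\Gamma_{q}) \cong \pi_{1}(B \mathrm{Diff}^{\delta}_{c}(\R^{q})) \), the abelianization of the discretized diffeomorphism group. This abelianization vanishes by Mather's celebrated theorem that the identity component of \( \mathrm{Diff}_{c}(\R^{q}) \) is a perfect group (later refined by Thurston to simplicity). So in the final step one combines the Mather-Thurston equivalence with Mather's perfectness to conclude.

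The main obstacle, and clearly the deepest ingredient, is the Mather-Thurston equivalence itself. Its proof constructs a comparison map from a semi-simplicial model of \( B\mathrm{Diff}^{\delta}_{c}(\R^{q}) \) into \( \Omega^{q} F\Gamma_{q} \), essentially by turning a compactly supported diffeomorphism into a Haefliger structure on a mapping cylinder, and then shows this map is a homology equivalence by a delicate excision argument filtered by the support of the diffeomorphism. Mather's perfectness theorem, the other deep input, is proved by a separate fragmentation argument decomposing any compactly supported diffeomorphism as a product of diffeomorphisms supported in small balls, each of which is expressible as a commutator via an Anosov-style infinite-swindle trick. Both inputs are highly nontrivial, which is why the statement here is imported as a black box.
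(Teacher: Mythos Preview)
The paper does not prove this theorem; it is quoted from the literature with a citation to Thurston and used as a black box throughout. You correctly acknowledge this at the end of your proposal, so the relevant question is whether your sketch accurately reflects the actual argument.

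The ingredients you name are the right ones, but the logic as written does not hold together. The Mather--Thurston theorem is a \emph{homology} isomorphism (an acyclic map), not a weak homotopy equivalence. This is not a cosmetic distinction: for a discrete group \( G \) one has \( \pi_{1}(BG) = G \), so if there really were a weak equivalence \( \Omega^{q} F\Gamma_{q} \simeq B\Diff^{\delta}_{c}(\R^{q}) \) you would conclude \( \pi_{q+1}(F\Gamma_{q}) \cong \Diff_{c}(\R^{q}) \), an enormous nonabelian group, and perfectness would be irrelevant to \( \pi_{1} \). The correct chain is: the homology isomorphism gives \( H_{1}(\Omega^{q}F\Gamma_{q}) \cong H_{1}(B\Diff^{\delta}_{c}(\R^{q})) = \Diff_{c}(\R^{q})^{\mathrm{ab}} = 0 \) by Mather's perfectness; then, because \( \Omega^{q}F\Gamma_{q} \) is a loop space, its \( \pi_{1} \) is abelian, and Hurewicz (given connectedness, i.e.\ the \( k = q \) case) yields \( \pi_{q+1}(F\Gamma_{q}) = \pi_{1}(\Omega^{q}F\Gamma_{q}) = H_{1}(\Omega^{q}F\Gamma_{q}) = 0 \).

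Your handling of the range \( k \le q \) is also not a proof. The claim that a trivialization of the normal bundle lets one replace \( \F \), after a concordance, by a single global chart is exactly the assertion that the class in \( \pi_{k}(F\Gamma_{q}) \) is trivial, so invoking it is circular. The \( q \)-connectivity of \( F\Gamma_{q} \) is a genuine (if easier) theorem of Haefliger, proved by a separate transversality-type argument, and should be cited as an input rather than asserted.
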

For our purposes, this has the useful consequence that for large codimensions, all vector bundles over a sphere can be realized as the normal bundle of a Haefliger structure.
More precisely, from the long exact sequence of the fibration
\[ F\Gamma_{q} \to B\Gamma^{+}_{q} \to \BGL^{+}_{q} , \]
one sees immediately that the maps 
\[ \nu_{*} \colon \pi_{k}(B\Gamma^{+}_{q}) \to \pi_{k}(\BGL^{+}_{q}) \]
are isomorphisms for \( k \le q+1 \) and surjective for \( k = q + 2 \).
It is an interesting question, to which we do not know the answer, whether the homomorphism is a surjection for all \( k \).

Another easy consequence of the Mather-Thurston Theorem is that the classifying space \( B\Gamma^{+}_{q} \) is simply connected.

\subsection{Characteristic classes}
The cohomology classes on \( B\Gamma_{q} \) and \( F\Gamma_{q} \) determine, via pullback by the classifying map, \emph{characteristic classes} of foliations.
For example, is the cohomology class on \( B\Gamma_{q} \) that pulls back to the well known Godbillon-Vey class, whose definition we briefly recall.

Suppose given a regular, transversely orientable, codimension-\( q \) foliation \( (M, \F) \) on smooth manifold \( M \).
By the Frobenius Theorem, this foliation is determined by a choice of decomposable \( q \)-form \( \omega \) on \( M \), for which furthermore there is a \( 1 \)-form \( \eta \) satisfying
\[ \d\omega = \eta\w\omega . \]
The \( (2q+1) \)-form
\[ \eta \w (\d\eta)^{q} \]
is closed, and the cohomology class it represents is independent of all choices just made in the construction.
This cohomology class is the Godbillon-Vey class.

For a general characteristic class of degree \( n \), say \( c \in H^{n}(B\Gamma_{q}) \), and a closed \( n \)-manifold \( M \) with a foliation \( \F \) classified by the map 
\[ f \colon M \to B\Gamma_{q} , \]
we may speak of the \emph{characteristic number}, the integral
\[ c(\F) = \int_{M} f^{*}c \in \R .  \]
As one might expect, these are invariant under foliated cobordism, by the Stoke's Theorem.
This means, in particular, that the characteristic numbers are invariants of concordance class.

\begin{remark}
  We have now two distinct ways to detect nontriviality of a concordance class \( (M, \F) \).

  \begin{enumerate}
  \item
    \emph{Non-vanishing characteristic numbers.}
    The characteristic classes of a trivial foliation on \( M \) vanish.
    So, if \( c(\F) \) is not zero, then the concordance invariance of characteristic numbers demonstrates \( \F \) as not being in the trivial concordance class.

  \item
    \emph{Nontrivial normal bundle.}
    The normal bundle of a concordance of Haefliger structures determines a concordance of normal bundles.
    By Theorem \ref{Thm: thm bundle concordance implies isomorphism}, concordant vector bundles are isomorphic, so if the normal bundle of \( \F \) is not trivial, there can be no concordance of \( \F \) to a trivial Haefliger structure.
  \end{enumerate}
\end{remark}

It is a remarkable fact about foliation characteristic classes that they are not always discrete invariants.
A characteristic class \( c \in H^{k}(B\Gamma_{q}) \) (or in \( H^{k}(F\Gamma_{q}) \)) is said to admit \emph{continuous variation} if the pairing 
\[ c \colon H_{k}(B\Gamma_{q}) \to \R \]
has image containing a non-empty open subset of \( \R \).
More generally, a finite set \( S \) of degree \( k \) characteristic classes is said to admit \emph{independent variation} if the evaluation map 
\[ c_{S} \colon H_{k}(B\Gamma_{q}) \to \R^{S} \]
has image containing a non-empty open set.
It is not difficult to see, by taking disjoint unions of foliations, that independent variation implies that \( H^{k}(B\Gamma_{q}) \) surjects onto all of \( \R^{s} \).

For an example, it is well known that in each codimension-\( q \), the Godbillon-Vey invariant in \( H^{2q+1}(B\Gamma_{q}) \) admits continuous variation.

One can also ask for the stronger condition of \emph{independent variation on \( M \)} for a fixed orientable \( k \)-manifold \( M \), meaning that there exists an open subset \( U \subset \R^{S} \) and family of foliations \( {(M, \F_{\lambda})}_{\lambda \in U} \) for which evaluation against \( S \) gives a surjection to \( U \).
In particular, for the case of \( M \) the sphere \( S^{k} \), this is \emph{independent homotopy variation}, meaning that the evaluation map 
\[ c_{S} \colon \pi_{k}(B\Gamma_{q}) \to \R^{S} \]
has image containing a nontrivial open subset.
In this case again, independent variation implies full surjection to \( \R^{S} \).
This can be seen, for example, using the foliation connected sum, Definition \ref{def: foliation connected sum} below.

\subsection{Surgery on manifolds}\label{sec: surgery on manifolds}
We recall the standard construction of surgery \cite{Milnor1961--ProcedureKillingHomotopyGroupsDifferentiableManifolds}, although we will need a slight variation that adjusts the collar where the surgery happens.
A \emph{\( k \)-surgery} on a manifold \( M \) can be performed for any embedded sphere \( S^{k} \) with trivialized normal bundle,
\[ \varphi \colon S^{k} \times 3 D^{n-k} \hookrightarrow M , \]
with result the \( \varphi \)-surgered manifold \( M' \).
The surgered \( M' \) is obtained by removing the image \( \varphi(S^{k} \times D^{n-k}) \) and smoothly gluing in a new region \( 2 D^{k+1} \times S^{n-k-1} \) by a radial identification
\[ S^{k} \times \left(\Int(2 D^{n-k}) \backslash D^{n-k}\right) \cong S^{k} \times (1,2) \times S^{n-k-1} \cong \left(\Int(2 D^{k+1}) \backslash D^{k+1}\right) \times S^{n-k-1} . \]
In case that \( M \) is oriented, the radial identification should be taken orientation reversing if \( M' \) is to be oriented;
in effect, the inner boundary of the left hand annulus is sent to the outer boundary of the right hand annulus.
The construction can be summarized as in the following diagram.
\begin{equation}\label{diagram: surgery}
  \begin{tikzcd}
    M & \mathring{M} \ar[r, hook] \ar[l, hook'] & M' \\
    S^{k} \times 3 D^{n-k} \ar[u, hook, "\varphi"] & S^{k} \times (1,2) \times S^{n-k-1} \ar[r, hook] \ar[u, hook] \ar[l, hook'] & 2 D^{k+1} \times S^{n-k-1} \ar[u, hook, "\varphi'" swap]
  \end{tikzcd}
\end{equation}
Here the open manifold \( \mathring{M} \) may be regarded as a submanifold of both \( M \) and \( M' \), namely
\begin{equation}\label{eq: common domain of surgering}
  M \backslash \varphi(S^{k} \times D^{n-k})
  = \mathring{M}
  = M' \backslash \varphi'(D^{k+1} \times S^{n-k-1}) .
\end{equation}
The surgery is performed along the collar \( S^{k} \times (1,2) \times S^{n-k-1} \) in \( \mathring{M} \), which in \( M \) is the intersection of \( \mathring{M} \) and the image of \( \varphi \) on \( S^{k} \times 2D^{n-k} \).

The surgery from \( M \) to \( M' \) can be realized by a simple cobordism.
This \emph{trace cobordism}  \( W(M, M') \) can be obtained by gluing onto \( [0,1] \times M \) the handle \( 2 D^{k+1} \times 2 D^{n-k} \) according to the choice of \( \varphi \), so that the boundary component intersecting \( \{1\} \times M \) is diffeomorphic to \( M' \).
The following particularly symmetric construction of \( W(M, M') \) will enable us to easily extend some foliations across the entire trace, when it is possible.
We require the handle attaching to have image in
\[ [0,1] \times \varphi(S^{k} \times 2 D^{n-k}) \subset [0,1] \times M , \]
so  it suffices to describe a model attaching to
\[ [0,1] \times S^{k} \times 2 D^{n-k} .\]
By the linear mapping of \( [0,1] \) onto \( [1,2] \) that fixes \( 1 \) and sends \( 0 \) to \( 2 \), we regard this as the product of annulus by disk,
\[ ([1,2] \times S^{k}) \times 2 D^{n-k} \subset \R^{k+1} \times \R^{n-k} , \]
with associated bi-radial coordinates \( (s, \theta, t, \vartheta) \).
Likewise, regard the handle \( 2 D^{k+1} \times 2 D^{n-k} \) using bi-radial coordinates on \( [0,2] \times S^{k} \times [0,2] \times S^{n-k-1} \), and map it into \( \R^{n+1} \),
\[ \begin{tikzcd}[row sep={8mm,between origins}]
    [0,2] \times S^{k} \times [0,2] \times S^{n-k-1} \ar[r] & \R^{k+1} \times \R^{n-k} \\
    (s, \theta, t, \vartheta) \ar[r, mapsto] & (\rho^{1}(s,t), \theta, \rho^{2}(s, t), \vartheta)
  \end{tikzcd} \]
where
\[ \rho = (\rho^{1}, \rho^{2}) \colon [0,2]^{2} \to [0,2] \times [0,\tfrac{3}{2}]  \]
is any choice of smooth embedding that one obtains for attaching in case \( k = 0, n = 1 \).
In other words, one simply obtains the general attaching model by `spinning' the \( (k,n) = (0,1) \) model about the spheres \( S^{k} \) and \( S^{n-k-1} \).
The \emph{attaching region}, the intersection of \( [0,1] \times M \) and \( 2 D^{k+1} \times 2 D^{n-k} \) is topologically a product of \( S^{k} \) and a contractible domain of dimension \( n-k+1 \).

The key property of this model of attaching is that \emph{the spherical coordinates \( \theta \) on the \( S^{k} \) factor of the attaching region and the handle are identified by the identity map.}

\section{Foliation Surgery}\label{sec: foliation surgery}
\subsection{Definition of foliation surgeries}
We now turn to foliated surgeries.
Suppose that the surgered-from manifold \( M \) of the previous section carries a Haefliger structure \( \F \).
This may be restricted to the common domain of surgery \( \mathring{M} = M \cap M' \) defined in Equation \eqref{eq: common domain of surgering}, to define a partial foliation of the surgered-to \( M' \).
The restriction may or may not be extendable to a foliation on all of \( M' \), much less the trace of surgery.
In case that extensions exist, we make the following definitions.
\begin{definition}
    A \emph{weak foliation surgery of} \( (M, \F) \) is a Haefliger structure \( (M', \F') \) that restricts to \( \F \) on \( \mathring{M} \).

    A \emph{(lax) foliation surgery of} is a Haefliger structure \( (M', \F') \) such that there exists a Haefliger structure on the trace \( W(M,M') \) with boundary restriction \( (M , \F) \cup (M', \F') \).

    A \emph{strong foliation surgery} is \( (M',F') \) that satisfies the conditions to be both a weak and lax foliation surgery.
\end{definition}

Foliation surgeries are clearly not unique.
As well, depending on \( \F \) and the choice of \( \varphi \), there may fail to exist any foliation surgery \( (M', \F') \), in either the weak or lax sense.
The obstructions to existence are described in Definition \ref{def: concordance class obstruction}.

Due to their flexibility, lax foliation surgeries are the most natural choice for our purposes, and we will simply say foliation surgery to mean lax foliation surgery.
The lax definition has also the advantage that it can be made without reference to a choice of model for the surgery \( M' \) or trace \( W(M,M') \).

When extant, lax foliation surgeries can be obtained by first extending \( \F \) to the trace \( W(M, M') \) and then restricting to \( M' \).
One is thus free to include an arbitrary concordance of \( (M, \F) \) `before' performing surgery.
On the other hand, an arbitrary lax foliation can be strictified to a strong surgery;
one may simply modify the foliation of \( W(M, M') \) in its concordance class to be the product foliation \( (M, \F) \times [0,\tfrac{1}{2}] \) on \( M \times [0, \tfrac{1}{2}] \), and then smoothly retract \( W(M,M') \) into itself, onto the union of \( M \times [0, \tfrac{1}{2}] \) and a handle.

Even with their additional flexibility, lax foliation surgeries behave well with respect to foliation characteristic numbers.
Indeed, the next Proposition follows immediately from cobordism invariance.
\begin{proposition}\label{thm: prop lax surgery preserves char numbers}
  Foliation characteristic numbers are unchanged by foliation surgery from  \( (M, \F) \) to
  \( (M', \F') \).
\end{proposition}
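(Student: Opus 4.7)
The plan is to deduce this from the standard foliated cobordism invariance of characteristic numbers, which the paper has already invoked when discussing concordance invariance. By definition of (lax) foliation surgery, the trace \( W = W(M, M') \) carries a Haefliger structure \( \tilde{\F} \) whose restriction to the boundary \( \partial W \) is \( (M, \F) \cup (M', \F') \). The trace \( W \) is a compact oriented \( (n+1) \)-manifold with oriented boundary \( \partial W = \overline{M} \sqcup M' \), where the bar denotes orientation reversal, so \( (M, \F) \cup (M', \F') \) is a foliated null-cobordism of the disjoint union \( (\overline{M} \sqcup M', \F \sqcup \F') \).

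Now fix any characteristic class \( c \in H^{n}(B\Gamma_{q}) \) of degree \( n = \dim M \), and let
\[ \tilde{f} \colon W \to B\Gamma_{q} \]
be a classifying map for \( \tilde{\F} \), with boundary restrictions \( f \colon M \to B\Gamma_{q} \) and \( f' \colon M' \to B\Gamma_{q} \) classifying \( \F \) and \( \F' \) respectively. Then \( \tilde{f}^{*}c \in H^{n}(W) \), and pairing with the relative fundamental class gives
\[ \langle \tilde{f}^{*}c, [\partial W] \rangle = \langle \tilde{f}^{*}c, \partial [W, \partial W] \rangle = \langle \delta \tilde{f}^{*}c, [W, \partial W] \rangle = 0 , \]
since \( \tilde{f}^{*}c \) is a genuine (absolute) cohomology class on \( W \), and thus its image in \( H^{n+1}(W, \partial W) \) under the connecting map \( \delta \) vanishes. (Equivalently, representing \( c \) by a closed de Rham cocycle and applying Stokes' Theorem to \( W \) gives the same conclusion.)

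Unpacking the boundary orientations, the vanishing of \( \langle \tilde{f}^{*}c, [\partial W] \rangle \) reads
\[ -\int_{M} f^{*}c + \int_{M'} (f')^{*}c = 0 , \]
which is the claimed equality of characteristic numbers. There is no real obstacle: the only thing one must be careful about is keeping the orientation conventions on the trace straight, which is routine given the symmetric model of the trace constructed in Section \ref{sec: surgery on manifolds}.
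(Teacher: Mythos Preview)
Your proof is correct and follows essentially the same approach as the paper, which simply states that the Proposition ``follows immediately from cobordism invariance'' (via Stokes' Theorem, as noted earlier in the paper). You have just spelled out the details of that cobordism-invariance argument more explicitly.
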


In contrast to the Proposition, weak foliation surgeries may fail to preserve characteristic numbers.
In such case, the weak foliation surgery cannot be a strong foliation surgery, the different characteristic numbers on \( (M, \F) \) and \( (M', \F') \) obstructing any extension to the trace.
On the other hand, it will follow from Remark \ref{rem: surgery concordance classes} and Theorem \ref{thm: unobstructed foliation surgery exists} below that a weak foliation surgery implies the existence of a strong foliation surgery.

For an example of a necessarily weak foliation surgery, fix any codimension \( 1 \) foliation \( (S^{3}, \F) \) with non-vanishing Godbillon-Vey number.
Because the foliation is regular and transversely orientable, there exists an embedded circle that is everywhere transverse to the foliation\footnote{In fact, the famous spherical wobble, Figure 1 of \cite{Thurston1972--NoncobordantFoliationsS3}, gives a picture of one such circle; locally, a fiber of the unit tangent bundle of the hyperbolic disk.}.
A thin enough normal neighborhood \( N \) of this circle is a solid torus on which the foliation restricts to define a trivialized normal bundle structure, \( N = S^{1} \times D^{2} \), which can be taken so that the normal fibers \( D^{2} \) are leaves of \( \F \).
The complement of \( N \) is a large solid torus \( \tilde{N} = D^{2} \times S^{1} \), and the surgery on \( \tilde{N} \) is \( M' = S^{1} \times S^{2} \).
But on \( M' \), the restriction of \( \F \) clearly extends to the trivial fiber bundle foliation over \( S^{1} \), which has Godbillon-Vey number \( 0 \).

Of course, the issue here is that the bulk of the foliation was cut out.
It would be interesting to understand more generally the extent to which a continuously varying family of surgeries changes the characteristic numbers.
If it could change continuously, this would provide a direct source of continuous variation.
On the other hand, if it could not, then this would indicate deeper nontrivial topological behavior.

\subsection{The concordance class obstruction}
As already indicated, it is not always possible to make a foliation surgery.
The following definition captures the potential obstruction.
\begin{definition}\label{def: concordance class obstruction}
  For \( (M, \F) \) and attaching map \( \varphi \colon S^{k} \times 3 D^{n-k} \hookrightarrow M \), the \emph{foliation surgery obstruction class} to surgery along \( \varphi \) is the concordance class of the restriction of \( \F \) to the sphere \( \varphi(S^{k} \times \{0\}) \).
\end{definition}

\begin{remark}\label{rem: surgery concordance classes}
  Any map of \( S^{k} \) homotopic in \( M \) to \( \varphi|_{S^{k} \times \{0\}} \) pulls back a foliation concordant to the obstruction class.
  In particular, one obtains the same concordance class by restricting \( \F \) to \( \varphi(S^{k} \times \{p\}) \) for any other point \( p \in D^{n-k} \).
  Consequently, if \( (M, \F) \) admits a weak foliation surgery \( (M', \F') \) along \( \varphi \), then the surgery obstruction class vanishes.
  Indeed, for any \( p \in \partial (2 D^{n-k}) \), the restriction of \( \F' \) to the disk \( \varphi'(2 D^{k+1} \times \{p\}) \) exhibits the concordance class of \( \F \) restricted to \( \varphi(S^{k} \times \{p\}) \) as trivial.
  It holds similarly that the obstruction class vanishes if \( (M, \F) \) admits a lax foliation surgery.

  The converse statement is part of the Theorem \ref{thm: unobstructed foliation surgery exists} below.
\end{remark}

\section{\texorpdfstring{\( 0 \)-Surgeries}{0-Surgeries} and Foliation Connected Sums}\label{sec: foliation connected sums}
Before turning to the problem of existence for general foliation surgeries, we treat the special case of \( 0 \)-surgeries, and in particular, connected sums.
This case is particularly straightforward, because there is no obstruction.
After showing the existence of foliated \( 0 \)-surgeries, we use it to define the group structure on concordance classes of foliations of the sphere.
This group structure will be used in the construction of general foliation surgeries in the following section.

\subsection{Existence}
The existence of foliated \( 0 \)-surgeries can be seen by the following direct construction.

\begin{proposition}\label{thm: prop 0 surgeries exist}
  Given a foliated \( (M, \F) \) and \( \varphi \colon S^{0} \times 3 D^{n} \hookrightarrow M \), a strong foliation surgery \( (M', \F') \) exists.

  If \( (M, \F) \) is transversely oriented, then \( (M',\F') \) can be chosen transversely orientable as well.
\end{proposition}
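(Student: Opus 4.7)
The main observation is that the obstruction class of Definition \ref{def: concordance class obstruction} is automatically trivial for \( 0 \)-surgeries: the restriction of \( \F \) to the two-point set \( \varphi(S^{0} \times \{0\}) \) consists of a pair of germs at two points of \( \R^{q} \), and any such datum extends to a trivial Haefliger structure on \( D^{1} \) by smoothly interpolating between the two images. Hence by Remark \ref{rem: surgery concordance classes}, nothing obstructs a weak surgery; the task is to construct a strong one explicitly.

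First I would shrink the attaching map so that each of the two attaching disks \( \varphi(\pm 1 \times 2 D^{n}) \) lies inside a single Haefliger chart of \( \F \). Since the Haefliger charts of \( \F \) form an open cover of \( M \) containing the attaching points \( p_{\pm} := \varphi(\pm 1, 0) \), by rescaling the \( 2D^{n} \) factor of \( \varphi \) I can ensure that its image embeds into charts \( f_{\pm} \colon U_{\pm} \to \R^{q} \). In particular, \( \F \) near each attaching disk is determined by the single chart \( f_{\pm} \).

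Next I would construct an extension of \( \F \) to the trace cobordism \( W(M, M') \) of Section \ref{sec: surgery on manifolds} and then restrict to \( M' \subset \partial W \). On the cylinder \( M \times [0, 1] \), extend \( \F \) by pullback along the projection to \( M \). On the handle \( H = 2D^{1} \times 2D^{n} \), define a single global Haefliger chart
\[ G(s, y) = (1 - \chi(s))\, f_{-}(\varphi(-1, y)) + \chi(s)\, f_{+}(\varphi(+1, y)) , \]
where \( s \in 2D^{1} \cong [-2, 2] \) and \( \chi \colon [-2, 2] \to [0,1] \) is a smooth cut-off equal to \( 0 \) on a neighborhood of \( -2 \) and to \( 1 \) on a neighborhood of \( +2 \). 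Because \( \chi \) is constant on collars of \( \pm 2 \), the map \( G \) literally equals \( f_{\pm}(\varphi(\pm 1, y)) \) on open neighborhoods of the attaching boundary \( \{\pm 2\} \times 2D^{n} \). Under the symmetric spun attaching model, these neighborhoods match regions in \( M \times [0,1] \) where \( \F \) is the pullback of \( f_{\pm} \), so the transitions between \( G \) and the charts of \( \F \) are identity germs on an open set covering the overlap. The pieces thus glue to a well-defined Haefliger structure on all of \( W(M, M') \), whose restriction to \( M' \) is the desired strong surgery \( (M', \F') \).

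For the transverse orientation claim, note that the only new transitions introduced by this extension are identity germs, which trivially have orientation-preserving differential. Therefore if \( \F \) is transversely oriented then so is the extension on \( W(M, M') \), and so is its restriction \( (M', \F') \). The only slightly delicate point in this plan is confirming that the \( s \)-independent collars on which \( G = f_{\pm} \circ \varphi(\pm 1, \cdot) \) fit compatibly with the spun attaching of Section \ref{sec: surgery on manifolds}; the plateau structure of \( \chi \) is chosen precisely to make this matching automatic.
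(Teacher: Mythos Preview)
Your approach differs from the paper's. The paper first concords $\F$ on the cylinder $M\times[0,1]$ so that it becomes \emph{constant} on $H_{2}\times[1-\epsilon,1]$ (using a radial crushing homotopy $F$), and only then extends by a \emph{constant} chart across the handle. The matching on the overlap is then trivial: both sides are the same constant. You instead keep the product foliation on the cylinder and try to interpolate on the handle between the two non-constant charts $f_{\pm}\circ\varphi(\pm1,\cdot)$.

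This creates a genuine gap at the gluing step. On the attaching region your handle chart is $G(s,y)=f_{+}(\varphi(+1,y))$, while the cylinder chart is $f_{+}(\pi_{M}(\cdot))$. For the transition to be the identity germ you need $\pi_{M}$ of an overlap point to equal $\varphi(+1,y)$, i.e.\ the handle's $2D^{n}$-coordinate $y$ must coincide with the $M$-disk coordinate under the attaching identification. In the paper's spun model the overlap point $(s,\theta,t,\vartheta)$ lands at $M$-coordinate $\varphi(\theta,\rho^{2}(s,t)\vartheta)$, so you would need $\rho^{2}(s,t)=t$ near $s=2$; nothing in Section~\ref{sec: surgery on manifolds} asserts this, and in fact $\rho^{2}$ takes values in $[0,\tfrac{3}{2}]$ while $t$ ranges over $[0,2]$. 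The plateau of $\chi$ kills the $s$-dependence of $G$ but does nothing about this radial mismatch in $y$. Without the identity transition, you have not specified any compatible $\gamma_{\beta\alpha}$, so the two pieces do not yet define a Haefliger structure on $W$, and the transverse-orientation argument (which relies on the new transitions being identities) is likewise unjustified.

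The fix is exactly what the paper does: precompose the cylinder foliation with a homotopy that crushes each attaching disk to its center near $t=1$, so that on the overlap both charts are constant and the matching is automatic. Alternatively you could redefine $G$ using the $M$-coordinate pulled back through the attaching map and then extend, but this amounts to the same crushing in disguise. One incidental advantage of your setup, once repaired: because your cylinder foliation is literally $\pi_{M}^{*}\F$, the restriction to $M'$ already agrees with $\F$ on $\mathring{M}$, so you get a strong surgery directly, whereas the paper has to shrink $\varphi$ afterward to upgrade from lax to strong.
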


\begin{proof}
  To simplify matters, we modify \( \F \) slightly in its concordance class, so that it is constant-valued near the basepoints \( H_{0} = \varphi(S^{0} \times \{0\}) \).
  With the notation 
  \[ H_{r} = \varphi(S^{0} \times r D^{n-k}) \]
  for the pair of disks in \( M \) of radius \( r \le 3 \), let
  \[ p \colon H_{3} \to H_{3} \]
  be a smooth, radially defined function that equals the identity in a neighborhood of the boundary and maps both components of \( H_{2} \) to their respective origins in \( H_{0} \).

  There clearly exists a smooth homotopy \( F \) from \( \Id_{H_{3}} \) to \( p \), which may furthermore be arranged so that \( F(t, \cdot) \) remains the identity near the boundary of \( H_{3} \) for all \( t \) and so that \( F(t,x) = p(x) \) for \( t \) close to \( 1 \) and all \( x \in H_{3} \).
  We denote again by \( F \) the homotopy defined on \( M \) via extension by identity,
  \[ F \colon M \times [0,1] \to M . \]
  The desired concordance is then given by the foliation \( \bar{\F} = F^{*}\F \) on \( M \times [0,1] \).
  By construction, this foliation has the property that its restriction to \( H_{2} \times [1-\epsilon,1] \) is contained in a single Haefliger chart \( f \), which is furthermore constant on both components of the domain.
  The choice of \( f \) may be made so as to be globally constant on its domain.

  It is now straightforward to extend the foliation from \( M \times [0,1] \) to the entire surgery trace \( W(M,M') \), only remaining to extend the atlas across the attached handle \( 2D^{1} \times 2D^{n} \).
  The domain of \( f \) intersects the handle at its endpoints, defining a constant function on \( (2 D^{1} \backslash D^{1}) \times 2D^{n} \), which is then extend to the constant function on the full handle.
  This extension defines a chart that is compatible with the original charts on \( M \times [0,1] \), and so coherently defines a Haefliger structure \( \hat{\F} \) on \( W(M,M') \).
  The restriction of \( \hat{\F} \) to \( M' \) gives a lax surgery.

  To obtain a strong surgery, observe that the foliation of \( M' \) agrees with that of \( M \) outside of the image of \( \varphi \).
  Repeating the construction replacing \( \varphi \) with its restriction to \( H_{r} \) for sufficiently small \( r \) will thus produce a strong foliation surgery for the original \( \varphi \).

  For the final claim, suppose that the foliation \( (M,\F) \) is transversely oriented, which holds if and only if \( \F \) admits an atlas of Haefliger charts with all transition functions orientation preserving.
  In such case, the atlas on \( (M',\F') \) constructed here also has orientation preserving transition functions.
  Indeed, the transition functions of \( (M, \F) \) and \( (M', \F') \) are seen to be the same after observing that all intersections of Haefliger charts occur in the common domain \( \mathring{M} = M \cap M' \).
\end{proof}

\begin{remark}
  The construction of Proposition \ref{thm: prop 0 surgeries exist} sometimes applies to general surgeries.
  For \( k > 0 \), if a normal trivialization \( \varphi \colon S^{k} \times D^{n-k} \hookrightarrow M \) can be covered by a single Haefliger chart of \( \F \), then a strong foliation surgery can be constructed using the same idea.
  However, it is not true in general that \( \varphi(S^{k} \times \{0\}) \) can be so covered, even in cases where strong foliation surgeries along \( \varphi \) exist.
\end{remark}

\begin{remark}
  The  extension of chart across the neck is not regular in the given surgery.
  This is so even if the original charts were regular, and one cannot in general expect to do better.
  For example, consider the two dimensional torus equipped with a regular codimension-\( 1 \) foliation.
  The \( 0 \)-surgery at two distinct points will result in the genus \( 2 \) surface, which does not support a regular foliation of codimension \( 1 \).
  Thus, the surgery necessarily introduces singularities in this case.
\end{remark}

\subsection{Uniqueness}
Proposition \ref{thm: prop 0 surgeries exist} provides the existence of foliation surgeries along a zero sphere \( \varphi \), which raises the question of uniqueness, at least up to concordance.
On the one hand, there do exist other foliation surgeries on \( \varphi \), which are strictly weak foliation \( 0 \)-surgeries, and so inequivalent to the strong surgery just constructed; these include topologically nontrivial twisting across the neck, see Remark \ref{rmk: manifold connected sum is not foliation connected sum} for example.
On the other hand, the lax \( 0 \)-surgeries are almost unique up to concordance, and can be made so by fixing a choice of transverse orientation for \( \F \).
This leads to the definition, which will be justified by the following Lemma.

\begin{definition}
  A foliation \( 0 \)-surgery \( (M',F') \) of a transversely oriented foliation \( (M,\F) \) is a \emph{trivial foliation \( 0 \)-surgery} if it is a lax surgery for which the trace foliation \( (W, \bar{\F}) \) admits a transverse orientation extending that of \( \F \).
\end{definition}
Note that the transversely orientable foliations constructed at the end of proof of Proposition \ref{thm: prop 0 surgeries exist} are examples.
In fact, the trivial foliation surgeries are unique up to concordance.
We show this and well-definedness up to concordance in the following Lemma, which will in turn allow us to define the group of foliations on spheres below.

\begin{lemma}\label{thm: lem 0-surgery well defined}
  Given transversely oriented \( (M, \F) \) and \( \varphi \) as in Proposition \ref{thm: prop 0 surgeries exist}, any two trivial foliation \( 0 \)-surgeries along \( \varphi \) are concordant.
  As a consequence, the transversely oriented concordance class of the trivial foliation surgery \( (M', \F') \) depends only on the transversely oriented concordance class of \( (M, \F) \) and the isotopy class of \( \varphi \).
\end{lemma}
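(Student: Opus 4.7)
The plan is to translate the concordance question into a relative homotopy question in the classifying space $B\Gamma_q^+$ and invoke the fact, noted earlier in the excerpt as a consequence of the Mather--Thurston Theorem, that $B\Gamma_q^+$ is simply connected. The hypothesis ``trivial foliation $0$-surgery'' is what makes this argument go: it exactly provides lifts of the relevant classifying maps to $B\Gamma_q^+$.

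By definition, each trivial foliation $0$-surgery $(M',\F'_i)$ is the boundary restriction of a trace foliation $(W,\bar\F_i)$ whose transverse orientation extends that of $\F$; in particular $\bar\F_i$ is classified by a map $\bar f_i \colon W \to B\Gamma_q^+$ with $\bar f_i|_M = f$, where $f \colon M \to B\Gamma_q^+$ classifies $\F$. The trace of a $0$-surgery deformation retracts onto $M$ with an arc attached at the two surgery points $\varphi(S^0 \times \{0\})$, so the cofiber $W/M$ is homotopy equivalent to $S^1$. Extensions of $f$ to a map $W \to B\Gamma_q^+$, considered up to homotopy rel $M$, are thus in bijection with $[W/M,B\Gamma_q^+]_{*} \cong \pi_1(B\Gamma_q^+) = 0$, so $\bar f_0$ and $\bar f_1$ are homotopic rel $M$.

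By the classifying-space property of $B\Gamma_q^+$, such a rel-$M$ homotopy lifts to a Haefliger structure on $W \times [0,1]$ whose restrictions to $W \times \{i\}$ are $\bar\F_i$ and whose restriction to $M \times [0,1]$ is the product concordance $\F \times [0,1]$. Restricting to $M' \times [0,1] \subset \partial W \times [0,1]$ produces the desired concordance from $\F'_0$ to $\F'_1$, and the transverse orientation inherits from that of the lift. For the consequence, given a transversely oriented concordance $\tilde\F$ on $M \times I$ from $\F^{(0)}$ to $\F^{(1)}$ and an ambient isotopy carrying $\varphi^{(0)}$ to $\varphi^{(1)}$, assemble these into a parametrized attaching $\Phi \colon S^0 \times 3D^n \times I \hookrightarrow M \times I$ and perform the corresponding parametrized $0$-surgery. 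This yields a partial Haefliger structure on $M' \times I$ away from its neck, and extending across the neck is governed by precisely the same $\pi_1(B\Gamma_q^+) = 0$ obstruction; the resulting Haefliger structure is a concordance between any chosen trivial surgeries at the two ends.

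\textbf{Main obstacle.} Once simple-connectedness is in hand, the conceptual argument is short; the main technical work lies in realizing the abstract rel-$M$ homotopy as a Haefliger structure concordance with precisely the stated product restriction on $M \times [0,1]$, rather than one that is merely concordant to the product. This is a standard application of the homotopy extension property to the pair $(W \times [0,1],\, M \times [0,1] \cup W \times \{0,1\})$, using a good CW-model of $B\Gamma_q^+$ such as the Milnor join construction cited in Section \ref{sec: basic notions}. A fully concrete alternative, in the spirit of Proposition \ref{thm: prop 0 surgeries exist}, is to first reduce both $\bar\F_i$ by concordance to the standard form in which a single constant-valued chart covers the handle core, and then interpolate the two resulting charts linearly in $\R^q$; this avoids any appeal to a model for $B\Gamma_q^+$ but requires a parallel verification that the standard-form reduction is available rather than merely existent.
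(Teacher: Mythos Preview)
Your argument is correct and takes a genuinely different route from the paper. The paper proceeds concretely: it first shows that any trivial $0$-surgery is concordant to a \emph{basic surgery} (one as in Proposition \ref{thm: prop 0 surgeries exist}, with a single constant Haefliger chart covering the handle core), and then glues the two basic traces along their common $M$-boundary to obtain a cobordism $W_3$ from $(M',\F'_0)$ to $(M',\F'_1)$. A collapse map $\bar c \colon W_3 \to M^\vee \times [0,4]$ is constructed, and the foliation is shown to descend because the two constant charts on the circle $U'_3$ of collapse loci can be merged into one; the transverse-orientation hypothesis is invoked exactly here to rule out an orientation twist around that circle. You instead observe directly that the cofiber $W/M$ is homotopy equivalent to $S^1$ and appeal to $\pi_1(B\Gamma_q^+)=0$ to conclude that any two extensions of the classifying map $f$ to $W$ are homotopic rel $M$.

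Your route is shorter and makes transparent that uniqueness is precisely simple connectivity of $B\Gamma_q^+$; the paper's route has the complementary virtue of never leaving the level of explicit Haefliger atlases, so the passage from homotopy of classifying maps back to concordance of structures (which you correctly flag as the main technical point) never arises. Two small imprecisions: the set of rel-$M$ homotopy classes of extensions is a $\pi_1(B\Gamma_q^+)$-torsor rather than literally in bijection with $[W/M,B\Gamma_q^+]_*$, and arranging $\bar f_i|_M = f$ exactly (rather than up to homotopy) already uses one instance of the homotopy extension property. Neither affects the conclusion. For the consequence, the paper's argument is somewhat cleaner than your parametrized surgery: it simply affixes the given concordance $(M\times[0,1],\F_t)$ to the trace of a trivial surgery of $\F_0$, thereby exhibiting $(M',\F'_0)$ itself as a trivial surgery of $(M,\F_1)$, and then invokes the first claim; isotopy invariance is handled separately by pulling back along the Palais isotopy and again reducing to concordance invariance.
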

\begin{proof}
  For the duration of the proof, let any trivial foliation surgery as constructed in Proposition \ref{thm: prop 0 surgeries exist} be called a \emph{basic surgery}.
  We prove the first claim by demonstrating that any trivial foliation \( 0 \)-surgery is concordant to a basic surgery and then demonstrating that any two basic surgeries are concordant.

  Suppose given a trivial foliation \( 0 \)-surgery \( (M, \F) \) to \( (M',\F') \), and its trace \( (W, \bar{\F}) \).
  On \( W \), fix a Morse function \( h \) equal to \( 0 \) at boundary \( M \), equal to \( 2 \) at boundary \( M' \), and equal to \( 1 \) at a unique critical point in the interior.
  Fix an embedded disk \( D^{1} \times D^{n} \to W \) that extends the attaching map \( \varphi \colon S^{0} \times D^{n} \to M \) and sends \( (0,0) \) to the critical point of \( h \).
  A sufficiently thin neighborhood \( U \) of \( D^{1} \times \{0\} \) can be covered by a single Haefliger chart: \( D^{1} \times \{0\} \) is compact, so can be covered by finitely many charts, and, \( U \) small enough, there is no obstruction to combining them into a single chart one by one.
  By modifying this chart on a smaller neighborhood \( U' \), arrange that \( D^{1} \times \{0\} \) is contained in a single level set.
  Now, the ``pulling open'' used in the proof of Proposition \ref{thm: prop 0 surgeries exist} can be done in a family over \( D^{1} \), to further modify \( \bar{\F} \) so that it is constant on \( U' \) while unchanged outside of \( U \).

  Fix sufficiently small \( \epsilon > 0 \) that for all \( t \le 1 + \epsilon \) the level set \( h^{-1}(t) \) intersects \( U' \).
  The set \( W_{\epsilon} = h^{-1}([0, 1+\epsilon]) \) is diffeomorphic to \( W \), and the restriction \( (W_{\epsilon}, \bar{\F}) \) admits a chart that is constant on \( U' \).
  This means, in particular, that the restriction \( (h^{-1}(1+\epsilon), \bar{\F}) \) is a basic surgery on \( M' = h^{-1}(1+\epsilon) \).
  On the other hand, the restriction of \( \bar{\F} \) to \( h^{-1}([1+\epsilon, 2]) \cong M' \times [0,1] \) shows that this basic surgery is concordant to the given trivial foliation \( 0 \)-surgery \( (M', \F') \).

  For the next step, we require the following collapsing map.
  The Morse function \( h \) has critical level set \( h^{-1}(1) \), which is the topological space \( M^{\vee} \) intermediate between \( M \) and \( M' \), obtained from either of the quotient maps
  \[ c \colon M \to M / \varphi(S^{0} \times \{0\}) \cong M^{\vee} \quad \mbox{ or } \quad c' \colon M' \to M' / \varphi'(\{0\} \times S^{n-1}) \cong M^{\vee} . \]
  These quotients can be extended uniformly to a map
  \[ \bar{c} \colon W \to M^{\vee} \times [0,2] , \]
  as follows.
  Fixing a metric on \( W \) and thus a gradient flow of \( h \), let \( S_{-} \) and \( S_{+} \) be respectively the set of points flowing to or from the critical point.
  The flow of the gradient determines a diffeomorphism
  \[ W \backslash (S_{-} \cup S_{+} ) \cong (M \backslash \varphi(S^{0} \times \{0\})) \times [0,2] , \]
  which has a unique continuous extension \( \bar{c} \), sending each \( p \in S_{-} \cup S_{+} \) to \( (*, h(p)) \in M^{\vee} \times [0,2] \), with \( * \) the singular point of \( M^{\vee} \).

  Now suppose given two basic surgeries \( (M', \F_{1}) \) and \( (M', \F_{2}) \) from \( (M,\F) \), realized by trace foliations \( (W_{1}, \bar{\F}_{1}) \) and \( (W_{2}, \bar{\F}_{2}) \).
  We may glue at the common \( M \)-boundaries, to obtain a foliation on \( W_{1} \cup W_{2} / \sim \), say
  \[ (W_{3}, \F_{3}) . \]
  This defines a cobordism between \( (M', \F_{1}) \) and \( (M', \F_{2}) \), but is not a concordance because \( W_{3} \) is not the product \( M' \times [0,1] \).
  Despite this, after fixing a metric on \( W_{3} \), the collapse maps of \( W_{1}, W_{2} \) glue to a map
  \[ \bar{c} \colon W_{3} \to M^{\vee} \times [0,4] \]
  that collapses the singular flows of the two Morse critical points of \( W_{3} \).
  By assumption, this collapse locus is covered by two constant valued Haefliger charts \( U'_{i} \subset W_{i}  \), which we would like to combine into one on the union \( U'_{3} = U'_{1} \cup U'_{2} \), which is homotopic to \( S^{1} \).
  Both charts being locally constant, the only obstruction to doing so would arise if the normal bundle of \( (U'_{3}, \F_{3}) \) failed to be orientable, but the assumption that both basic surgery traces are chosen compatible with the transverse orientation of \( (M, \F) \) means that the transverse orientations of \( (U_{i}, \F_{i}) \) glue compatibly.
  Granted that the Haefliger structure on \( (M_{3}, \F_{3}) \) has a chart that is constant on \( U'_{3} \), which contains the singular set \( \bar{c}^{-1}(*) \), there exists a foliation \( \F_{4} \) on \( M^{\vee} \times [0,4] \) that pulls back to \( \F_{3} \).
  We have on the other hand the map
  \[ c' \times \Id_{[0,4]} \colon M' \times [0,4] \to M^{\vee} \times [0,4] , \]
  and the pullback of \( \F_{4} \) gives a concordance between the two basic surgeries.

  For the second claim, suppose we start with concordant transversely oriented foliations \( (M, \F_{0}) \) and \( (M, \F_{1}) \), the restrictions of a transversely oriented foliation \( (M \times [0,1], \F_{t}) \).
  Given any trivial surgery \( (M', \F'_{0}) \) of \( \F_{0} \), with trace \( (W, \bar{\F}) \), affix the concordance  \( (M \times [0,1], \F_{t}) \) at the boundary \( M \), to obtain a transversely oriented lax surgery \( (M, \F_{1}) \) to \( (M', \F'_{0}) \).
  This demonstrates \( (M', \F'_{0}) \) as a trivial foliation surgery of \( (M, \F_{1}) \), so by what we have just shown, \( (M', \F'_{0}) \) is concordant to any trivial foliation surgery \( (M', \F'_{1}) \).

  Finally, it is a standard fact, the disc Theorem of Palais \cite{Palais1960--ExtendingDiffeomorphisms}, that the underlying manifold surgery \( M' \) depends only on the isotopy class of \( \varphi \).
  Independence of \( \F' \) from the isotopy class of \( \varphi \) follows from concordance invariance, because one can use the Palais isotopy to define a concordance by pullback.  
\end{proof}

\subsection{The group of foliated spheres}

Given pointed, oriented manifolds \( (M_{1},*), (M_{2},*) \) of equal dimension, let \( M = M_{1} \cup M_{2} \) and
\[ \varphi_{0} \colon S^{0} \to M \]
be the map to basepoints.
For a normal trivialization
\[ \varphi \colon S^{0} \times 3 D^{n} \hookrightarrow M , \]
chosen appropriately with respect to orientation of \( M \), the surgery along \( \varphi \) results in the manifold connected sum \( M' = M_{1} \# M_{2} \).

Now suppose the manifolds are foliated, \( (M_{i}, \F_{i}) \), with transverse orientations fixed, and let 
\[ (M, \F) = (M_{1}, \F_{1}) \cup (M_{2}, \F_{2}) \]
be their disjoint union.
\begin{definition}\label{def: foliation connected sum}
  The \emph{foliation connected sum} \( (M_{1}, \F_{1}) \# (M_{2}, \F_{2}) \) is the concordance class of the trivial foliation \( 0 \)-surgery of \( (M, \F) \) along \( \varphi \).
\end{definition}
By Lemma \ref{thm: lem 0-surgery well defined}, this concordance class is well defined, and depends only on the concordance classes of \( \F_{i} \), their transverse orientations, and the connected components of basepoints.

Recall the wedge sum (or pointed coproduct) of pointed spaces, the topological space \( M_{1} \vee M_{2} \) obtained by disjoint union and then identification of basepoints.
As the wedge sum is a coproduct, the (pointed) classifying maps \( f_{i} \colon M_{i} \to B\Gamma^{+}_{q} \) of \( \F_{i} \) determine a map \( f_{1} \vee f_{2} \), which classifies a Haefliger structure on \( M_{1} \vee M_{2} \).
On the other hand, there is a collapsing map \( M_{1} \# M_{2} \to M_{1} \vee M_{2} \), which sends the neck to the basepoint.
By construction, the concordance class of \( (M_{1}, \F_{1}) \# (M_{2}, \F_{2}) \) is pulled back by this collapse for some Haefliger structure on \( M_{1} \vee M_{2} \).
These Haefliger structures agree up to concordance, so that the following diagram of classifying maps commutes up to homotopy.
\begin{equation}\label{eq: diagram connected and wedge sums}
  \begin{tikzcd}[row sep={6mm,between origins}]
    M_{1} \# M_{2} \ar[dd]\ar[dr, "f_{1} \# f_{2}", pos=0] & \\
    & B\Gamma^{+}_{q} \\
    M_{1} \vee M_{2} \ar[ur, "f_{1} \vee f_{2}" swap, pos=0] &
  \end{tikzcd}
\end{equation}

Now take both manifolds the standard sphere, \( M_{1} = M_{2} = S^{k} \).
In this case, \( S^{k} \# S^{k} \cong S^{k} \), so the foliation connected sum defines an addition law for foliations of the sphere.
Furthermore, the collapse map is in this case \( S^{k} \cong S^{k} \# S^{k} \to S^{k} \vee S^{k} \), which is the co-multiplication map used to define homotopy groups.
For this reason, the addition of concordance classes is the same as the addition of elements in \( \pi_{k}(B\Gamma^{+}_{q}) \).

\begin{definition}
  The \emph{(foliation) concordance class group} 
  \[ \Theta^{k}_{\Gamma^{+}_{q}} = \pi_{k}(B\Gamma^{+}_{q}) \]
  is the set of concordance classes of transversely oriented codimension-\( q \) Haefliger structures on \( S^{k} \), with the group law given by foliation connected sum.
\end{definition}
In this definition it is important that the classifying space \( B\Gamma^{+}_{q} \) is simply connected.
Indeed, the simple connectivity means that pointed homotopy classes of spheres in \( B\Gamma^{+}_{q} \) are the same as free homotopy classes.
For comparison, \( B\Gamma_{q} \) is not simply connected, so one has only the map
\[ \pi_{k}(B\Gamma_{q}) \to \Theta^{k}_{\Gamma_{q}} , \]
where \( \Theta^{k}_{\Gamma_{q}} = [S^{k}, B\Gamma_{q}] \) is the set of free homotopy classes, equal the quotient of pointed homotopy classes \( \pi_{k}(B\Gamma_{q}) = [S^{k}, B\Gamma]_{*} \) by the action of \( \pi_{1}(B\Gamma_{q}) = \Z/2\Z \).
In particular, the group law of \( \pi_{k}(B\Gamma_{q}) \) does not descend to \( \Theta^{k}_{\Gamma_{q}} \) in this case.

It follows immediately from the invariance of characteristic numbers under lax foliations, Lemma \ref{thm: prop lax surgery preserves char numbers}, that the concordance class groups are additively compatible with characteristic numbers.
That is to say, for any characteristic class \( c \in H^{k}(B\Gamma^{+}_{q}, \R) \), integration determines a group homomorphism
\[ c \colon \Theta^{k}_{\Gamma^{+}_{q}} \to \R , \]
which sends a foliation \( (S^{k}, \F) \) to its characteristic number \( c(\F) \).
Furthermore, given an arbitrary oriented \( k \)-manifold \( M \), the foliation connected sum defines an action of \( \Theta^{k}_{\Gamma^{+}_{q}} \) on the collection \( [M, B\Gamma^{+}_{q}] \) of transversely oriented codimension-\( q \) Haefliger structure concordance classes on \( M \), and the characteristic map
\[ c \colon [M, B\Gamma^{+}_{q}] \to \R \]
is clearly compatible with this action.
As a consequence, whenever a set of degree \( k \)-characteristic classes \( S \) varies independently on \( S^{k} \), the set also varies independently for Haefliger structures on \( M \).
Applying this observation to \( M = S^{k} \) itself, it is easily seen that whenever the characteristic map \( c_{S} \) of a set \( S \) has open image in \( \R^{S} \), it is actually surjective.
Remark here that allowing singularities helps us here;
not all manifolds admit regular foliations of a given codimension, but there do always exist Haefliger structures, such as the trivial one.

\begin{remark}\label{rmk: manifold connected sum is not foliation connected sum}
  A small word of caution is in order, on the difference between manifold and foliation connected sum in the case of point foliations.
  Recall that these codimension-\( n \) Haefliger structures on an \( n \)-manifold \( M \) are comparable with smooth structures of \( M \);
  the charts of the smooth structure on \( M \) may be regarded as the charts of a Haefliger structure \( \F_{M} \), resulting in a regular point foliation.
  For two manifolds, the manifold connected sum \( M \# N \) is again a smooth manifold, but the induced Haefliger structure \( \F_{M \# N} \) may not agree with the foliation connected sum \( (M, \F_{M}) \# (N, \F_{N}) \).
  
  This can already be seen for the regular point foliation \( (S^{2}, \F_{S^{2}}) \), whose normal bundle is the tangent bundle of \( S^{2} \) and thus has Euler number \( 2 \in \Z \cong H^{2}(S^{2}) \).
  On the one hand, \( S^{2} \) is the identity element for manifold connected sums of smooth spheres (which is in any case the trivial group in this dimension).
  On the other hand, \( (S^{2}, \F_{S^{2}}) \) cannot be the identity in \( \Theta^{2}_{\Gamma^{+}_{2}} \), because the first Chern class \( c \) of \( B\Gamma^{+}_{2} \), pulled back from \( \BGL_{2} \) by the normal map, gives a group morphism
  \[ c \colon \Theta^{2}_{\Gamma^{+}_{2}} \to \R \]
  that sends \( (S^{2}, \F_{S^{2}}) \) to \( 2 \).

  Alternatively, one may note that any trivial Haefliger structure on \( S^{2} \) represents the identity element of \( \Theta^{2}_{\Gamma^{+}_{2}} \), and \( (S^{2}, \F_{S^{2}}) \) is not concordant to the trivial foliation, due to its nontrivial normal bundle.
  
  The point is that the manifold connected sum is required to result in a smooth structure, so it is necessary to include a spherical inversion across the neck in its definition.
  This is different from the foliation connected sum, which is homotopically trivial at the neck.

  One could of course combine these two operations, to the larger group of foliations on all smooth structures of \( S^{k} \), but we will not require such considerations here.
\end{remark}


As a final remark of this subsection, we note that the fact that the operation of foliation connected sum defines a group law on foliated spheres can be checked directly, without resort to the group structure of \( \pi_{k}(B\Gamma^{+}_{q}) \).
For example, given \( (S^{k}, \F) \), the inverse is \( (\bar{S}^{k}, \F) \), where \( \bar{S}^{k} \) is the same sphere but with opposite orientation.
To see this explicitly, take the round \( S^{k} \subset \R^{k+1} \) with basepoint at \( e_{1} = (1,0,\ldots) \) and identify with \( \bar{S}^{k} \) via the reflection orthogonal to \( e_{1} \).
Let \( p \colon S^{k} \to D^{k} \) be the orthogonal projection along \( e_{1} \).
The connected sum \( (S^{k}, \F) \# (\bar{S}^{k}, \F) \) is concordant to a foliation that can be pulled back by \( p \) from a foliation on \( D^{k} \), and a null-concordance of the foliation on \( D^{k} \) pulls back to a null-concordance of \( (S^{k}, \F) \# (\bar{S}^{k}, \F) \).

\subsection{A transitive action on bundle trivializations over \texorpdfstring{\( S^{k} \)}{the k sphere}}
We now turn to the second main aim of this section, which is to demonstrate the following Lemma.

Given a transversely oriented foliation \( (S^{k}, \F) \) in the trivial concordance class, the normal bundle \( E_{0} \) of \( \F \) is necessarily trivial, and has an orientation induced by the transverse orientation of \( \F \).
However, depending on \( k \), the trivializations of \( E_{0} \) may not be unique, even up to homotopy.
By contrast, the normal bundle \( E_{1} \) of a transversely oriented foliation \( (D^{k+1}, \tilde{\F}) \) does admit a unique oriented trivialization, up to homotopy.
Given a choice of oriented trivialization \( \sigma \) of \( E_{0} \), say that an extension \( (D^{k+1}, \tilde{\F}) \) of \( \F \) is compatible with \( \sigma \) if \( E_{1} \) admits a trivialization that restricts to \( \sigma \).

\begin{lemma}\label{thm: lem transitive action on trivializations}
  For \( k \le q+1 \), fix a transversely oriented codimension-\( q \) foliation \( (S^{k}, \F) \) in the trivial concordance class.
  For each choice of oriented trivialization \( \sigma \) of \( E_{0} \), there exists an extension \( (D^{k+1}, \tilde{\F}) \) compatible with \( \sigma \).
\end{lemma}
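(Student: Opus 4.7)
The plan is to work at the level of classifying maps and invoke the Mather--Thurston connectivity theorem. By construction, \( F\Gamma_{q} \) is the homotopy fiber of \( \nu \colon B\Gamma^{+}_{q} \to \BGL^{+}_{q} \), so that the data of a transversely oriented codimension-\( q \) Haefliger structure on a space together with a trivialization of its normal bundle is equivalent to a map into \( F\Gamma_{q} \). I would first repackage \( (\F, \sigma) \) as a lift \( \tilde{f} \colon S^{k} \to F\Gamma_{q} \) of the classifying map \( f \colon S^{k} \to B\Gamma^{+}_{q} \) of \( \F \); such a lift exists precisely because \( \sigma \) gives the data of a null-homotopy of \( \nu \circ f \), which is exactly what is needed to lift through the fibration \( F\Gamma_{q} \to B\Gamma^{+}_{q} \to \BGL^{+}_{q} \).

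Next, since \( F\Gamma_{q} \) is \( (q+1) \)-connected by Theorem \ref{thm: Mather-Thurston} and \( k \le q+1 \) by hypothesis, the map \( \tilde{f} \) is null-homotopic. A null-homotopy is the same as an extension \( \tilde{F} \colon D^{k+1} \to F\Gamma_{q} \) that agrees with \( \tilde{f} \) strictly on the boundary sphere. Composing with the inclusion \( F\Gamma_{q} \hookrightarrow B\Gamma^{+}_{q} \) and pulling back the universal Haefliger structure produces \( (D^{k+1}, \tilde{\F}) \), which extends \( \F \) because the classifying map restricts strictly to \( f \) on the boundary. Because \( \tilde{F} \) factors through \( F\Gamma_{q} \), the normal bundle of \( \tilde{\F} \) inherits a canonical trivialization, and by construction this trivialization restricts to \( \sigma \) on \( S^{k} \).

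The one technical subtlety is confirming that the extension of classifying maps corresponds to an extension of Haefliger structures on the nose rather than merely up to concordance. This is routine given the functoriality of pullback of the universal structure together with the fact that \( \tilde{F} \) restricts strictly to \( \tilde{f} \); any residual slack between equivalent atlases can be absorbed by a collar of the sort used in the proof of the earlier extension lemma. This is essentially the only obstacle, and it is handled by Mather--Thurston: without the hypothesis \( k \le q+1 \), the possibly nonzero group \( \pi_{k}(F\Gamma_{q}) \) would genuinely obstruct extending the lift, and the conclusion of the lemma could fail.
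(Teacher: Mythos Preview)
Your proof is correct, but it takes a genuinely different route from the paper's argument. The paper proceeds constructively: it first fixes an arbitrary extension \( (D^{k+1}, \tilde{\F}) \) (which exists because \( \F \) is null-concordant), reads off the induced trivialization \( t \) of \( E_{0} \), and then corrects \( t \) to the desired \( \sigma \) by taking a foliation connected sum with a suitable \( (S^{k+1}, \F_{\alpha}) \). Here \( \alpha \in \pi_{k+1}(\BGL^{+}_{q}) \) is the difference between \( t \) and \( \sigma \) under the standard transitive action on trivializations, and Mather--Thurston is invoked as the surjectivity of \( \nu_{*} \colon \pi_{k+1}(B\Gamma^{+}_{q}) \to \pi_{k+1}(\BGL^{+}_{q}) \) (valid for \( k \le q+1 \)) to guarantee that such an \( \F_{\alpha} \) exists. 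By contrast, you bypass the connected-sum machinery entirely by encoding the pair \( (\F, \sigma) \) as a single map into the homotopy fiber \( F\Gamma_{q} \) and invoking Mather--Thurston directly as the \( (q+1) \)-connectivity of that space. Your argument is cleaner from a homotopy-theoretic standpoint and does not even need to separately invoke the hypothesis that \( \F \) is null-concordant (it is automatic once \( \tilde{f} \) lands in the connected \( F\Gamma_{q} \)). The paper's argument, on the other hand, is more explicit about \emph{how} one passes between two extensions realizing different trivializations, and it exercises the foliation-connected-sum and transitive-action apparatus built up in the preceding subsection, which is thematically aligned with the surgery viewpoint of the paper.
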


The proof of the Lemma relies on the action of \( \pi_{k+1}(\BGL^{+}_{q}) \) on the oriented trivializations of \( E_{0} \), which we recall briefly.
To lighten notation, let \( G \) be an arbitrary connected Lie group, which in the application to normal bundles will be \( \GL^{+}_{q} \).
First recall that, while the trivial \( G \)-bundle \( E_{0} \) is isomorphic to \( S^{k} \times G \), there is no canonical choice of trivializing isomorphism.
Rather, there is the set of homotopy trivializations
\[ \Triv(E_{0}) = \left\{ (E_{0}, *) \stackrel{\cong}{\to} \left(S^{k}\times G, (*,e)\right) \right\}/\sim , \]
with equivalence \( \sim \) by homotopy through pointed \( G \)-isomorphisms, where we have fixed a basepoint for \( E_{0} \) and \( S^{k} \).
It is easily seen that this set is in bijection with homotopy classes of pointed sections of \( E_{0} \), and this latter set is itself acted on freely and transitively by \( \pi_{k}(G) = [S^{k}, G]_{*} \), the difference between two sections given by pointwise multiplication.
With the identification \( \pi_{k}(G) \cong \pi_{k+1}(BG) \), we have thus a free and transitive action of \( \pi_{k+1}(BG) \) on \( \Triv(E_{0}) \), which we will call the \emph{standard action} on trivializations.

A second description of this action can be given using bundle connected sums.
Suppose given two principal \( G \)-bundles \( E, E' \) over respective \( n \)-manifolds \( M, M' \), the bundles classified by respective maps \( f \colon M \to BG \) and \( f' \colon M \to BG \).
If one fixes basepoints of \( M \) and \( M' \) and supposes that \( f, f' \) are pointed maps to \( BG \), then there is a well defined pointed map \( M \vee M' \to BG \).
But under the assumption that \( G \) is connected, we have \( BG \) simply connected, and so the sets \( [M,BG] \) and \( [M, BG]_{*} \) (respectively,  free and pointed homotopy classes of maps) are in bijection (cf. Hatcher \cite{Hatcher2002--AlgebraicTopology} Proposition 4.A.2.).
In this case, it is unambiguous to fix arbitrary basepoints of \( M, M' \) and form the wedge sum \( f \vee f' \), to obtain the classifying map
\[ M \# M' \to M \vee M' \stackrel{f \vee f'}{\to} BG \]
of a \emph{bundle connected sum} \( E \# E' \) over \( M \# M' \).

For a more explicit construction, suppose without loss of generality that the disks \( D, D' \) in \( M \) and \( M' \) over which \( M \# M' \) is formed are trivializing disks for \( E \) and \( E' \) respectively.
For the covering of \( M \# M' \) by 
\[ U = M \backslash \tfrac{1}{2}D \quad \mbox{ and } \quad U' = M' \backslash \tfrac{1}{2}D' ,\]
we have the restriction bundles \( E|_{U} \) and \( E'|_{U'} \).
The intersection \( U \cap U' \) is the neck of \( M \# M' \), and we have by assumption that the restrictions \( E|_{U} \) and \( E'|_{U'} \) are trivialized.
We obtain the bundle \( E \# E' \) by gluing these trivialized bundles over the neck.
Because \( G \) is connected, all choices of gluing are isotopic, and the resulting bundle is unique up to isomorphism.

The bundle connected sum is topologically compatible with the classifying maps for \( G \)-bundles in precisely the same manner as Diagram \ref{eq: diagram connected and wedge sums}, but mapping into \( B G \).
Again, taking both base manifolds as the sphere, one obtains an additive operation on bundles over the sphere, which is clearly the same as the usual addition in \( \pi_{k}(BG) \).


Return to the action on trivializations of \( E_{0} \).
For \( E_{1} \) the trivial \( G \)-bundle over \( D^{k+1} \), let \( \Map(E_{0}, E_{1}) \) be the set of homotopy classes of bundle maps from \( E_{0} \to E_{1} \) over the boundary inclusion \( i \colon S^{k} \to D^{k+1} \).
There is an isomorphism
\[ \Map(E_{0}, E_{1}) \cong \Triv(E_{0}) , \]
which sends a map \( E_{0} \to E_{1} = D^{k+1} \times G \) over \( i \) to the restriction of codomain \( E_{0} \to S^{k} \times G \).

Fix a trivialization \( t \colon E_{0} \to E_{1} \) and an element \( \alpha \in \pi_{k+1}(BG) \), regarded as the map \( \alpha \colon S^{k+1} \to BG \) classifying a bundle \( E_{\alpha} \) over \( S^{k+1} \).
Over a point in the interior of \( D^{k+1} \), form the bundle connected sum \( E_{1} \# E_{\alpha} \), which is again isomorphic to \( E_{1} \).
By construction, the intersection \( E_{1} \cap (E_{1} \# E_{\alpha}) \) is a bundle over a neighborhood of \( S^{k} \) in \( D^{k+1} \).
As such, the map \( t \) defines a map \( \hat{t} \colon E_{0} \to E_{1} \cap (E_{1} \# E_{\alpha}) \), for which the codomain may be extended to \( E_{1} \# E_{\alpha} \), to define a map \( \alpha \cdot t \).
In other words, there is for each trivialization \( t \) a unique choice of trivialization \( \alpha \cdot t \) making the following diagram commute.
\[ \begin{tikzcd}
    & E_{1} \cap (E_{1} \# E_{\alpha}) \ar[dl] \ar[dr] & \\
    E_{1} \ar[d] & E_{0} \ar[r, "\alpha\cdot t"] \ar[u, "\hat{t}"] \ar[l, "t" swap] \ar[d] & E_{1} \# E_{\alpha} \cong E_{1} \ar[d] \\
    D^{k+1} & S^{k} \ar[r] \ar[l] & D^{k+1}
  \end{tikzcd} \]
This defines an action of \( \pi_{k+1}(B G) \) on \( \Map(E_{0}, E_{1}) \) by \( t \mapsto \alpha \cdot t \).
The difference between the two trivializations of \( E_{0} \) is precisely the clutching map of \( \alpha \), and it follows that this action is the same as the standard action on trivializations.

Specializing again to the case of normal bundles, let \( \Theta^{k}_{\GL^{+}_{q}} = \pi_{k}(\BGL^{+}_{q}) \) denote the group of \( \GL^{+}_{q} \) bundles over \( S^{k} \).
The normal map \( B\Gamma^{+}_{q} \to \BGL^{+}_{q} \) naturally induces a group homomorphism
\[ \Theta^{k}_{\Gamma^{+}_{q}} \to \Theta^{k}_{\GL^{+}_{q}} , \]
which can be interpreted as the fact that the connected sum of foliations is compatible with the connected sum of their normal bundles.
It is an immediate consequence of the Mather-Thurston Theorem \ref{thm: Mather-Thurston} that this is an isomorphism for \( k \le q+1 \) and a surjection for \( k = q + 2 \).
It is an interesting question, to which we do not know the answer, whether the homomorphism is a surjection for all \( k \).

In any case, these consideration make it simple to show the Lemma.
\begin{proof}[Proof of Lemma \ref{thm: lem transitive action on trivializations}]
  Fix an arbitrary oriented extension of \( (S^{k}, \F) \to (D^{k+1}, \tilde{\F}) \), extant by assumption.
  The normal bundle of this extension determines a trivialization \( t \colon E_{0} \to E_{1} \).
  Any other oriented trivialization of \( E_{0} \) is of the form \( \alpha \cdot t \) for some \( \alpha \in \pi_{k+1}(\BGL^{+}_{q}) \).
  By the Mather-Thurston Theorem, and assumption that \( k \le q+1 \), there exists a foliation \( \F_{\alpha} \) of \( S^{k+1} \) whose normal bundle has classifying map \( \alpha \).
  The foliation \( (D^{k+1}, \tilde{\F}) \# (S^{k+1}, \F_{\alpha}) \) induces the trivialization \( \alpha \cdot t \) of \( E_{0} \), and the Lemma follows.
\end{proof}

\section{General Foliation Surgeries}\label{sec: general foliation surgeries}
\subsection{Existence of surgery foliations}
Using the results of the previous section, we can characterize the existence of foliation surgeries in general.
\begin{theorem}\label{thm: unobstructed foliation surgery exists}
  Given a foliated manifold \( (M, \F) \) and an attaching map
  \( \varphi \colon S^{k} \times 3 D^{n-k} \hookrightarrow M \),
  a strong foliation surgery \( (M', \F') \) along \( \varphi \) exists if and only if the obstruction concordance class of \( \varphi \) is trivial.

  When this holds, if \( \F \) is transversely oriented, then the surgery can be chosen to be transversely oriented.
  Furthermore, if the normal bundle of \( \F \) is trivial and \( k \le q+1 \), then the strong foliation surgery can be chosen so that the normal bundle of \( \F' \) is again trivial.
\end{theorem}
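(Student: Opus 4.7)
The plan is to prove sufficiency, since necessity is the observation already recorded in Remark \ref{rem: surgery concordance classes}. Assume the obstruction class vanishes, so that \( \F_{0} := \varphi_{0}^{*}\F \) admits an extension \( (D^{k+1}, \tilde{\F}_{0}) \). The strategy is to construct a Haefliger structure on the trace \( W(M, M') \) by first normalizing \( \F \) near the attaching sphere and then extending across the handle \( 2D^{k+1} \times 2D^{n-k} \) via pullback from \( \tilde{\F}_{0} \).

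For the normalization, the projection \( \pi \colon \varphi(S^{k} \times 2D^{n-k}) \to \varphi(S^{k} \times \{0\}) \) is a homotopy equivalence, so the restriction of \( \F \) is concordant to \( \pi^{*}\F_{0} \) there. I would realize this concordance inside a collar of \( M \times \{1\} \subset [0,1] \times M \), to assume without loss of generality that \( \F \) already has the pullback form \( \pi^{*}\F_{0} \) on a neighborhood of \( \varphi(S^{k} \times \{0\}) \). On the handle, I would first enlarge \( \tilde{\F}_{0} \) to a foliation on \( 2D^{k+1} \) (possible because its restriction to every intermediate sphere stays in the trivial concordance class), and then pull back along the projection \( 2D^{k+1} \times 2D^{n-k} \to 2D^{k+1} \) to obtain a handle foliation. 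The ``spinning'' construction of the attaching region identifies \( S^{k} \)-spherical coordinates by the identity, so the handle foliation matches the product \( \F \times [0,1] \) on the overlap, producing a coherent Haefliger structure on \( W(M, M') \) whose restriction to the outgoing boundary is the sought strong foliation surgery \( \F' \).

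Transverse orientation propagates through the construction, provided \( \tilde{\F}_{0} \) is chosen as a transversely oriented null-concordance of \( \F_{0} \), which exists whenever \( \F \) is transversely oriented. The more delicate statement is the trivial normal bundle claim, assuming \( k \le q+1 \) and a global trivialization \( \sigma \) of the normal bundle of \( \F \). Its restriction to \( \varphi(S^{k}) \) gives a distinguished oriented trivialization of the normal bundle of \( \F_{0} \), while the extension \( \tilde{\F}_{0} \) induces another trivialization \( t \) of the same bundle (since the normal bundle of \( \tilde{\F}_{0} \) over the contractible disk \( D^{k+1} \) is automatically trivial). By Lemma \ref{thm: lem transitive action on trivializations}, the hypothesis \( k \le q+1 \) allows one to replace \( \tilde{\F}_{0} \) by a foliation connected sum with an appropriate foliation on \( S^{k+1} \) so that \( t \) becomes homotopic to (the restriction of) \( \sigma \); with this choice, the two trivializations glue on the attaching region to a global trivialization of the normal bundle of \( \F' \).

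The main obstacle in the plan is the last step, matching the trivializations of the normal bundle across the attaching region. This is precisely where Lemma \ref{thm: lem transitive action on trivializations} is indispensable, and where the dimension bound \( k \le q+1 \) enters through the Mather--Thurston connectivity of \( F\Gamma_{q} \). The other steps (the preliminary concordance to pullback form, the extension across the handle, and the transfer of transverse orientation) are comparatively routine once the correct normalization of \( \F \) near \( \varphi(S^{k} \times \{0\}) \) is in place.
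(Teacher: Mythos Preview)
Your proposal is correct and follows essentially the same approach as the paper: normalize \( \F \) near the attaching sphere so that it is pulled back from \( (S^{k}, \F_{0}) \), extend the null-concordance \( \tilde{\F}_{0} \) over the handle via the first-factor projection, and then invoke Lemma~\ref{thm: lem transitive action on trivializations} for the clutching argument that controls the normal bundle. The paper makes the normalization explicit via a radially defined collapse map \( p \colon H_{3} \to H_{3} \) and a homotopy \( F \) (so that the concordance is visibly supported inside the image of \( \varphi \)), and it notes one small step you omit: the construction as stated yields a \emph{lax} surgery, which becomes \emph{strong} only after shrinking the attaching region (replacing \( H_{3} \) by \( H_{r} \) for small \( r \)), since otherwise \( \F' \) may differ from \( \F \) on the annular part of \( \mathring{M} \) inside the image of \( \varphi \).
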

\begin{proof}
  By Remark \ref{rem: surgery concordance classes}, the existence of even a weak foliation surgery implies that the obstruction class of \( \varphi \) vanishes.

  Supposing conversely that the obstruction class
  vanishes, we construct a lax foliation surgery.
  This can be done using the same idea as was used to prove Proposition \ref{thm: prop 0 surgeries exist}, but now in a bundle over \( S^{k} \).
  For any \( r \le 3 \), let \( H_{r} = \varphi(S^{k} \times r D^{n-k}) \) be the bundle of radius \( r \) disks in \( M \), and
  \[ p \colon H_{3} \to H_{3} \]
  be a smooth function that equals the identity in a neighborhood of the boundary and maps each fiber of of \( H_{2} \) to its basepoint in \( H_{0} = \varphi(S^{k} \times \{0\}) \cong S^{k} \).
  There exists a smooth homotopy
  \[ F \colon M \times [0,1] \to M \]
  that is the identity outside of \( H_{3} \) and restricted to \( H_{3} \) interpolates \( \Id_{H_{3}} \) to \( p \), arranged so that \( F(t,x) = p(x) \) for \( t \) close to \( 1 \) and all \( x \in H_{3} \).

  Use \( F \) to extend \( \F \) to a foliation \( \bar{\F} = F^{*}\F \) on the product region \( M \times [0,1] \) of the surgery trace \( W = W(M, M') \).
  This foliation has the property that its restriction to \( H_{2} \times [1-\epsilon,1] \) factors through the core \( (H_{0}, \F) \): for the projection \( \pi \colon H_{2} \times [1-\epsilon,1] \to H_{0} \times \{1\} \), it holds that
  \[ \bar{\F}|_{H_{2} \times [1-\epsilon,1]} = \pi^{*} \F . \]
  The assumption that the obstruction class of \( \varphi \) vanishes means there exists an extension
  of \( (H_{0}, \F) \) to some foliation \( (D^{k+1}, \tilde{\F}) \).
  We construct from this a larger extension \( (2 D^{k+1}, \tilde{\F}) \) such that \( \tilde{\F} \) is radially constant near the boundary;
  more precisely, we require that 
  \[ \tilde{\F}|_{A} = r^{*}\F \]
  for the annulus \( A = [1-\epsilon,2] \times S^{k} \) in \( 2 D^{k+1} \) and the retraction \( r \colon A \to S^{k} \).
  Such an extension can be constructed by pulling back \( (D^{k+1}, \tilde{\F}) \) by any smooth radially symmetric map \( 2 D^{k+1} \to D^{k+1} \) that fixes the origin and projects a neighborhood of the annulus \( A \) to \( S^{k} \).
  We then extend this to a foliation on the handle \( (2 D^{k+1} \times 2 D^{n-k}, \tilde{\F}) \) by pulling back via the projection onto first factor.

  The trace cobordism \( W \) is covered by \( M \times [0,1] \) and the handle \( 2 D^{k+1} \times 2 D^{n-k} \).
  We have given a foliation on both, and supposing \( \epsilon > 0 \) sufficiently small, these are seen to cohere into a single Haefliger structure.
  Indeed, on their intersection, both foliations factor through the projection to the same foliation \( (S^{k}, \F) \).
  
  Finally, observe that the foliation on \( M' \) just constructed agrees with that on \( M \) outside of the image of \( \varphi \).
  Repeating the construction replacing \( H_{3} \) with \( H_{r} \) for sufficiently small \( r \) will thus produce a strong foliation surgery for the original \( \varphi \).

  In the case that \( (M, \F) \) has a transverse orientation, there is an induced transverse orientation on \( (S^{k}, \F) \).
  If the extension \( (D^{k+1}, \tilde{\F}) \) is given the compatible transverse orientation, then \( (M', \F') \) has an induced transverse orientation.
  
  Now, suppose that the normal bundle of \( \F \) is trivial and that \( k \le q+1 \).
  We will show that an appropriate choice of extension \( (D^{k+1}, \tilde{\F}) \) can be made so that the normal bundle \( \bar{\nu} \) of \( \bar{\F} \) over \( W \) is trivial.
  This suffices to complete the proof, because the restriction \( (M', \F') \) has also trivial normal bundle in this case.

  To this end, continue with the same notation as above, with \( \tilde{\F} \) to be determined.
  The situation is summarized in the commuting diagram, with all arrows maps of Haefliger structure.
  \[
    \begin{tikzcd}
      & (W, \bar{\F}) & \\
      (M \times [0,1], \F) \ar[ur, hook] & & (2 D^{k+1} \times 2 D^{n-k}, \tilde{\F}) \ar[ul, hook'] \ar[d] \\
      & (S^{k} \times [1,2] \times D^{n-k}, \F) \ar[ul, hook'] \ar[d] \ar[ur, hook] & (D^{k+1}, \tilde{\F}) \\
      & (S^{k}, \F) \ar[ur, hook] &
    \end{tikzcd}
  \]
  For any choice of \( \tilde{\F} \), the normal bundle of \( \bar{\F} \) can be described by a clutching construction, because its restrictions to both of \( M \times [0,1] \) and \( 2D^{k+1} \times 2 D^{n-k} \) are trivial.
  So, a choice of trivialization of the normal bundle of \( \F \) induces a trivialization of the normal bundle over the attaching region \( S^{k} \times [1,2] \times D^{n-k} \simeq S^{k} \), and the normal bundle of \( \bar{\F} \) will be trivial if this trivialization is the same as that induced by \( \tilde{\F} \).
  But we have shown in Lemma \ref{thm: lem transitive action on trivializations} that, supposing \( k \le q+1 \), all trivializations can be realized by some choice of \( \tilde{\F} \).
\end{proof}

\subsection{Foliation surgeries to the sphere}

Recall that an \( n \)-manifold \( M \) is \emph{stably trivial} if its tangent bundle is stably trivial, meaning there is a trivial line bundle \( \epsilon_{1} \) so that \( TM \oplus \epsilon_{1} \) is the trivial bundle.
These are particularly nice manifolds from the perspective of surgery theory;
by work of Milnor \cites{Milnor1961--ProcedureKillingHomotopyGroupsDifferentiableManifolds, Milnor1960--CobordismRingOmegaComplexAnalogue} and Wall \cite{Wall1959--NoteCobordismRing},  there exists a sequence of surgeries to the sphere,
\begin{equation}\label{eq: surgery sequence to sphere}
  M \rightsquigarrow M_{1} \rightsquigarrow M_{2} \rightsquigarrow \ldots \rightsquigarrow S^{n} .
\end{equation}
The procedure is to first surger along \( 0 \)-spheres to obtain to a connected manifold, then along generators of the fundamental group to obtain a simply connected manifold, and so on.
In particular, the surgeries in this case need only be done on spheres of dimension up to half that of \( M \), after which Poincar\'e duality guarantees that one has the sphere.

Although the following Theorem is stated more generally, the case \( k = 2q+1 \) is the one of primary interest, because that is the degree in which examples of independent variation of characteristic classes are known.
\begin{theorem}\label{thm: foliations surgerable to spheres}
  On a stably trivial manifold \( M \) of dimension \( n \le 2q+2 \), every \mbox{codimension-\( q \)} foliation \( (M, \F) \) with trivial normal bundle can be surgered to a foliation \( (S^{n}, \F') \) with the same characteristic numbers as \( \F \).
\end{theorem}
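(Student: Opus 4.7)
The plan is to build the desired sequence of foliation surgeries by applying Theorem \ref{thm: unobstructed foliation surgery exists} inductively along the classical Milnor-Wall surgery sequence to the sphere. First I would invoke the results of Milnor and Wall: since $M$ is stably trivial and closed, there exists a finite sequence of manifold surgeries as in Equation \eqref{eq: surgery sequence to sphere},
\[ M \rightsquigarrow M_{1} \rightsquigarrow M_{2} \rightsquigarrow \cdots \rightsquigarrow S^{n} , \]
where each surgery is performed along an attaching sphere of dimension $k_{i} \le \lfloor n/2 \rfloor$. Since $n \le 2q+2$, every such $k_{i}$ satisfies $k_{i} \le q+1$, which is precisely the dimensional range required by the final clause of Theorem \ref{thm: unobstructed foliation surgery exists}.

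Next I would carry the foliation through this sequence inductively. Suppose at stage $i$ we have a foliation $(M_{i}, \F_{i})$ with trivial normal bundle, and let $\varphi_{i} \colon S^{k_{i}} \times 3D^{n-k_{i}} \hookrightarrow M_{i}$ be the attaching map of the next surgery. The obstruction concordance class is the restriction $\varphi_{i}^{*}\F_{i}$, a codimension-$q$ Haefliger structure on $S^{k_{i}}$ with trivial normal bundle. By the Mather-Thurston Theorem \ref{thm: Mather-Thurston} and the fibration sequence
\[ F\Gamma_{q} \to B\Gamma_{q} \to \BGL_{q} , \]
the map $\pi_{k_{i}}(B\Gamma_{q}) \to \pi_{k_{i}}(\BGL_{q})$ is injective for $k_{i} \le q+1$, so a Haefliger structure on $S^{k_{i}}$ with trivial normal bundle is null-concordant. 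Hence the obstruction class vanishes. Theorem \ref{thm: unobstructed foliation surgery exists} then supplies a strong foliation surgery $(M_{i+1}, \F_{i+1})$; and, crucially, since $k_{i} \le q+1$, it may be arranged so that the normal bundle of $\F_{i+1}$ is again trivial, preserving the inductive hypothesis.

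Iterating, we arrive after finitely many steps at a Haefliger structure $(S^{n}, \F')$ whose normal bundle is trivial. Each step is a strong foliation surgery, so by Proposition \ref{thm: prop lax surgery preserves char numbers} the characteristic numbers are unchanged at each stage; composing, the characteristic numbers of $\F'$ agree with those of $\F$.

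The only subtlety, and the one the plan hinges on, is maintaining triviality of the normal bundle at every stage, for this is what certifies the vanishing of the obstruction class at the next step via Mather-Thurston. This is exactly the content that the last clause of Theorem \ref{thm: unobstructed foliation surgery exists} guarantees in the stated range $k \le q+1$; the hypothesis $n \le 2q+2$ is precisely what keeps all the attaching spheres produced by the Milnor-Wall procedure within this range, so the induction closes.
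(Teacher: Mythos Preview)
Your proposal is correct and follows essentially the same approach as the paper's own proof: invoke the Milnor--Wall surgery sequence, observe that the dimensional hypothesis $n \le 2q+2$ forces all attaching spheres to have dimension at most $q+1$, use Mather--Thurston to conclude the obstruction class vanishes because the normal bundle is trivial, and then apply Theorem~\ref{thm: unobstructed foliation surgery exists} to propagate both the foliation and the triviality of its normal bundle through the sequence. Your write-up is in fact slightly more explicit than the paper's about why the inductive hypothesis closes.
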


\begin{proof}
  Proceeding inductively along the surgery sequence \eqref{eq: surgery sequence to sphere}, suppose that \( (M, \F) \) is \( (k-1) \)-connected (with \( k \le q+1 \)) and that \( \F \) has trivial normal bundle.
  Any choice of basic element in \( \pi_{k}(M) \) can be represented by an embedding \( \varphi_{0} \colon S^{k} \to M \) admitting a trivial normal neighborhood \( \varphi \colon S^{k} \times D^{n-k} \to M \), and surgery along \( \varphi \) kills this element of \( \pi_{k}(M) \).
  Because the normal bundle of \( \F \) is trivial, its restriction to \( \varphi_{0} \) will also have trivial normal bundle.
  But then the Mather-Thurston Theorem applies, and the concordance obstruction \( (\varphi_{0}(S^{k}), \F) \) is trivial.
  Thus, Theorem \ref{thm: unobstructed foliation surgery exists}, there is a foliation surgery \( (M', \F') \) along \( \varphi \)  that preserves characteristic numbers and such that \( \F' \) again has trivial normal bundle.
\end{proof}


Although Theorem \ref{thm: foliations surgerable to spheres} only applies directly to stably trivial manifolds, it is sufficient to conclude that for \( n \le 2q+2 \), every set of characteristic classes in \( H^{n}(F\Gamma_{q}) \) admitting independent variation on some manifold also admits independent homotopy variation.
This can be seen using the following classical fact.
\begin{theorem}\label{thm: realization of homology classes}
  Given an orientable \( n \)-manifold \( M \), there exists a stably trivial \( n \)-manifold \( N \) that dominates \( M \).
\end{theorem}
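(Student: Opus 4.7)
The plan is to build $N$ as an iterated cyclic branched cover of $M$, with the branch loci and cover degrees chosen so that each step eliminates one of the characteristic classes of the tangent bundle that obstructs stable triviality. Since $M$ is closed and orientable, $TM$ is stably trivial if and only if every Pontryagin class $p_k(TM) \in H^{4k}(M; \mathbb{Z})$ and every Stiefel--Whitney class $w_j(TM)$ for $j \ge 2$ vanishes; as $M$ is compact, this is a finite list of obstructions.

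The key computation is the following: given an oriented codimension-$2$ submanifold $\Sigma \subset M$ with trivialized normal bundle and an integer $d \ge 2$, the $d$-fold cyclic cover $\pi \colon N_{\Sigma} \to M$ branched along $\Sigma$ is smooth, and a tubular-neighborhood calculation gives $TN_{\Sigma}$ as $\pi^{*} TM$ modified by a bundle twist supported on $\pi^{-1}(\Sigma)$. The resulting characteristic classes of $TN_{\Sigma}$ equal $\pi^{*}$ of the corresponding classes of $TM$ plus explicit correction polynomials in $[\pi^{-1}(\Sigma)]$ and $(d-1)$. Working inductively in degree, I would assume all obstructions of degree $< k$ have been eliminated and then pick $\Sigma$ Poincar\'e dual to a class chosen so that the correction term cancels the first remaining obstruction in degree $k$. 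After finitely many such branched covers, the composed cover $N \to M$ has positive degree, and $TN$ has all characteristic classes vanishing.

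The main obstacle is in the inductive step: each branched cover must kill its targeted obstruction while not disturbing those already eliminated. This will follow from the observation that the correction from a cover tuned against a degree-$k$ obstruction only affects characteristic classes of degree at least $k$, so the order-by-degree induction remains consistent. A secondary subtlety is the passage from vanishing of all characteristic classes to genuine stable triviality of $TN$ as measured by the classifying map $N \to BSO$, which in general may require an additional adjustment using obstruction theory through the Postnikov tower of $BSO$; in the range of dimensions relevant to the present paper, however, this obstruction-theoretic step is readily handled and the composed cover $N \to M$ provides the required stably trivial dominating manifold.
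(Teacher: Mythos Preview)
Your proposal has genuine gaps, and the paper takes an entirely different route.

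First, the claim in your opening paragraph that a closed orientable manifold is stably parallelizable if and only if all Pontryagin and Stiefel--Whitney classes vanish is false in general. The classifying map $M \to BSO$ is controlled by the full Postnikov tower of $BSO$, not merely by the primary cohomology classes; for instance, $\widetilde{KO}(S^{8k+1}) \cong \mathbb{Z}/2$ already shows there are stably nontrivial bundles with all Pontryagin and Stiefel--Whitney classes zero. You acknowledge this as a ``secondary subtlety'' and then dismiss it by restricting to ``the range of dimensions relevant to the present paper,'' but Theorem~\ref{thm: realization of homology classes} is stated for \emph{all} $n$, and the application (Theorem~\ref{thm: foliations surgerable to spheres}) already requires arbitrary $n = 2q+1$ as $q$ varies. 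So the escape clause does not save the argument.

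Second, the inductive branched-cover step is only a plan, not a proof. You assert that the characteristic-class correction from a $d$-fold cyclic cover branched along $\Sigma$ is an explicit polynomial that can be tuned to cancel a prescribed obstruction in degree $k$ while leaving lower degrees untouched, but you neither write down this polynomial nor verify that a suitable $\Sigma$ (Poincar\'e dual to the needed class, with trivial normal bundle, and admitting a $d$-fold cyclic branched cover) can always be found. Even granting Hirzebruch-type formulae for signatures and Pontryagin numbers of branched covers, the claim that one can hit an arbitrary integral class this way is far from obvious.

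By contrast, the paper does not attempt a direct construction at all: it simply cites two known results. Thom's solution to the Steenrod realization problem already yields, for each rational homology class, a realizing manifold that can be taken stably trivial. More concretely, Gaifullin has shown that in each dimension $n$ there is a single stably trivial manifold $M^{n}_{0}$ (the Tomei manifold, a regular level set in Euclidean space and hence stably parallelizable) whose finite covers dominate every closed orientable $n$-manifold; since finite covers of stably trivial manifolds are stably trivial, this immediately gives the theorem.
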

To say that \( N \) \emph{dominates} \( M \) means that \( M \) and \( N \) are orientable manifolds of equal dimension and there exists a smooth map \( f \colon N \to M \) of positive degree, in the sense that \( f \) pushes forward the fundamental class of \( N \) to a positive multiple of the fundamental class of \( M \).
Given a foliation on \( (M, \F) \), the pullback by dominating \( f \) gives a foliation on \( N \) with characteristic numbers scaled by \( \deg(f) \).
Thus an independently varying family of foliations on \( M \) produces a varying family on \( N \), which can furthermore be surgered to the sphere, and we have the next Corollary.
\begin{corollary}\label{thm: cor continuously varying classes are spherical}
  Fix a codimension \( q \) and dimension \( n \le 2q+2 \).
  Any set \( S \) of foliation characteristic classes in \( H^{n}(F\Gamma_{q}) \) that are independent on some manifold \( M \) are also independent on \( S^{n} \).
\end{corollary}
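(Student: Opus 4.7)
The plan is to combine Theorem \ref{thm: foliations surgerable to spheres} with the domination result of Theorem \ref{thm: realization of homology classes}: first transfer the independently varying family on $M$ to a stably trivial manifold by pullback under a positive degree map, then use foliation surgery to arrive on $S^n$. There is no substantive obstacle, since all the hard work has already been done in the two invoked theorems; what remains is naturality and bookkeeping.

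I would begin by unpacking the hypothesis. Independent variation on $M$ gives an open $U \subset \R^S$ and a family of codimension-$q$ Haefliger structures $(M, \F_\lambda)_{\lambda \in U}$ whose characteristic map $c_S$ surjects onto $U$. Because every class in $S$ lies in $H^n(F\Gamma_q)$, only foliations classified through $F\Gamma_q$ contribute nontrivially, so we may assume each $\F_\lambda$ is classified by a map into $F\Gamma_q$; in particular, each has trivial normal bundle. By Theorem \ref{thm: realization of homology classes}, fix a stably trivial orientable $n$-manifold $N$ and a smooth map $f \colon N \to M$ of positive degree $d$. Set $\hat{\F}_\lambda = f^* \F_\lambda$: this is again classified by $F\Gamma_q$, so its normal bundle remains trivial, and by naturality of characteristic classes together with $\int_N f^* \alpha = d \int_M \alpha$, we have $c_S(\hat{\F}_\lambda) = d \cdot c_S(\F_\lambda)$. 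Hence the family $(N, \hat{\F}_\lambda)_{\lambda \in U}$ has characteristic image containing the open set $d \cdot U \subset \R^S$.

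Finally, I would apply Theorem \ref{thm: foliations surgerable to spheres} pointwise in $\lambda$: the hypotheses $n \le 2q+2$, $N$ stably trivial, and $\hat{\F}_\lambda$ of trivial normal bundle all hold, producing a sequence of strong foliation surgeries to a Haefliger structure $(S^n, \F'_\lambda)$ whose characteristic numbers match those of $\hat{\F}_\lambda$. The characteristic image of $(S^n, \F'_\lambda)_{\lambda \in U}$ therefore still contains $d \cdot U$, which is open, witnessing independent variation of $S$ on $S^n$. No uniformity of the surgery construction in $\lambda$ is needed: independence is a condition on the image of $c_S$, so the argument may be performed one parameter at a time.
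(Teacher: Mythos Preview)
Your proposal is correct and follows essentially the same route the paper sketches in the paragraph preceding the Corollary: dominate $M$ by a stably trivial $N$ via Theorem \ref{thm: realization of homology classes}, pull back the family (scaling characteristic numbers by the degree), then apply Theorem \ref{thm: foliations surgerable to spheres} to reach $S^{n}$. Your added remarks---that the hypothesis forces trivial normal bundle since the classes live in $H^{n}(F\Gamma_{q})$, and that no $\lambda$-uniformity of the surgery is needed---are sound and make explicit what the paper leaves implicit.
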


\begin{proof}[Remark on the proof of Theorem \ref{thm: realization of homology classes}]
  Much more generally, in Thom's work \cite{Thom1954--QuelquesProprietesGlobalesVarietesDifferentiables} on Steenrod's manifold realization problem for rational homology classes, the realizing manifold can be chosen stably trivial (see, for example, \cite{Ajij-Chakraborty-Sen2025--ContactDomination}).
  More recent work of Gaifullin provides a more constructive argument, potentially useful for applications.
  In \cite{Gaifullin2013--CombinatorialRealisationCyclesSmallCovers}, Gaifullin shows that in each dimension \( n \) there exists a manifold \( M^{n}_{0} \) (the Tomei manifold) such that every orientable \( n \)-manifold \( M \) is dominated by some finite cover of \( M^{n}_{0} \).

  One description of \( M^{0}_{n} \) is as the collection of real symmetric tridiagonal \( (n+1) \times (n+1) \) matrices with eigenvalues equal a fixed set of \( n+1 \) distinct reals.
  Because this manifold can be realized as the preimage of a regular value for a smooth function \( \R^{2n+1} \to \R^{n+1} \), it has stably trivial tangent bundle.
  The same then holds for its finite covers, and so the claim holds.
  The author would like to credit the discussion by users Igor Belegradek and mme (Mike Miller Eismeier) on MathOverflow question \cite{BelegradekEismeier2020--CanEveryManifoldDominatedParallelizableOne} for the observation that \( M^{0}_{n} \) is stably trivial.
\end{proof}

\section{A Variation on Thurston's Construction of Godbillon-Vey manifolds}\label{sec: Thurston's construction}


We describe in this section the construction of some foliations for which Theorem \ref{thm: foliations surgerable to spheres} can be applied directly.
The result of this application will be for each \( q > 1 \) a family of foliations on \( S^{2q+1} \) whose Godbillon-Vey numbers surject onto \( \R \).
The construction is due to Thurston, as recounted by Mizutani in \cite{Mizutani2017--ThurstonsConstructionSurjectiveHomomorphismH}, although we require a small variation to ensure trivial normal bundle.

Recall that given a smooth bundle \( E \) with (manifold) fiber \( F \), a foliation \( (E, \F) \) is a \emph{bundle foliation} if \( \F \) has codimension equal the dimension of \( F \) and the leaves of \( \F \) are everywhere transverse to the fibers of \( E \).
Let \( \Sigma \) be an arbitrary hyperbolic surface, orientable of genus at least \( 2 \), and \( T^{q-1} \) the torus of dimension \( q-1 \).
\begin{theorem}[Thurston \cite{Mizutani2017--ThurstonsConstructionSurjectiveHomomorphismH}]
  For each integer \( q \ge 1 \) and each \( r \in \R \), there exists a codimension-\( q \) bundle foliation \( \F_{r} \) on the trivial \( S^{q} \)-bundle \( (\Sigma \times T^{q-1}) \times S^{q} \) with Godbillon-Vey number \( GV(\F_{r}) = r \).
\end{theorem}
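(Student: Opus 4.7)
The plan is to realize each foliation \( \F_{r} \) as the horizontal foliation of a flat \( S^{q} \)-bundle over \( B := \Sigma \times T^{q-1} \), determined by a monodromy representation \( \rho_{r} \colon \pi_{1}(B) \to \mathrm{Diff}^{+}(S^{q}) \) whose image fixes a common point of \( S^{q} \); the fixed point provides a global section and hence a trivialization of the bundle. The parameter \( r \) will enter as a scaling of the torus monodromy, and the Godbillon--Vey number will be computed via Chern--Weil using Maurer--Cartan forms of the Lie group in which \( \rho_{r} \) takes values.

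For the construction of \( \rho_{r} \), I would embed \( PSL_{2}(\R) \hookrightarrow \mathrm{Diff}^{+}(S^{q}) \) by letting \( PSL_{2}(\R) \) act on \( S^{q} = \R^{q} \cup \{\infty\} \) as M\"obius transformations that preserve a distinguished \( S^{1} \subset S^{q} \) and fix \( \infty \) --- equivalently, as the affine action on a single coordinate axis of \( \R^{q} \), extended by the identity in the normal directions. Let \( \rho_{\Sigma} \colon \pi_{1}(\Sigma) \to PSL_{2}(\R) \) be the hyperbolic uniformization, and let \( T_{r} \colon \Z^{q-1} \to \mathrm{Diff}^{+}(S^{q}) \) send the standard generators to translations in the \( q-1 \) normal coordinate directions, with step sizes depending linearly on \( r \). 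Since \( PSL_{2}(\R) \) acts trivially on the normal coordinates, the images of \( \rho_{\Sigma} \) and \( T_{r} \) commute, yielding \( \rho_{r} = \rho_{\Sigma} \times T_{r} \colon \pi_{1}(B) \to \mathrm{Diff}^{+}(S^{q}) \) with image inside the affine group of \( \R^{q} \), all fixing \( \infty \). The suspension of \( \rho_{r} \) then produces the desired flat \( S^{q} \)-bundle with codimension-\( q \) horizontal foliation \( \F_{r} \); the bundle is trivial because the constant-\( \infty \) section is global.

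To verify \( GV(\F_{r}) = r \) (after a harmless reparametrization of the scaling), I would exploit that the foliation is classified by a map to \( BG_{\delta} \) for the containing Lie group \( G \) (a subgroup of the affine group of \( \R^{q} \)), through which the Godbillon--Vey class pulls back from an invariant \( (2q+1) \)-form on \( G \) expressed via its Maurer--Cartan form. A direct algebraic computation, essentially the one of Mizutani, identifies this form as the wedge product of the codimension-1 Godbillon--Vey \( 3 \)-form in the \( PSL_{2}(\R) \)-directions with the invariant volume form in the \( \R^{q-1} \)-directions. Pulled back to \( B \), the first factor integrates over \( \Sigma \) to a nonzero multiple of the hyperbolic area (by Gauss--Bonnet), and the second integrates over \( T^{q-1} \) to a polynomial function of the scaling parameters. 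Arranging the scaling so that the total integral varies continuously and surjectively in \( r \), and reparametrizing, gives \( GV(\F_{r}) = r \).

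The main step requiring care is the Chern--Weil identification in the last paragraph, which reduces to the classical computation of the Godbillon--Vey class of the Roussarie (parabolic) foliation on \( PSL_{2}(\R) \) together with a routine volume calculation on the torus factor. The two auxiliary points --- commutativity of the \( \pi_{1}(\Sigma) \) and \( \Z^{q-1} \) actions, and triviality of the resulting \( S^{q} \)-bundle --- are both immediate from the choice of affine model in which \( PSL_{2}(\R) \) acts on one axis and the torus translations act normally, with everything stabilizing \( \infty \).
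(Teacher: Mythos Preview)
There is a genuine gap in your construction: the claimed embedding of \( PSL_{2}(\R) \) into \( \mathrm{Diff}^{+}(S^{q}) \) does not exist as you describe it. You assert that \( PSL_{2}(\R) \) acts on \( S^{q} = \R^{q} \cup \{\infty\} \) ``as the affine action on a single coordinate axis of \( \R^{q} \)'' while fixing \( \infty \), but the M\"obius transformations of \( \R \cup \{\infty\} \) that fix \( \infty \) form only the two--dimensional affine subgroup \( \mathrm{Aff}^{+}(\R) \cong \R \rtimes \R_{>0} \), not the full three--dimensional \( PSL_{2}(\R) \). The Fuchsian representation \( \rho_{\Sigma} \) of a hyperbolic surface group never lands in this affine subgroup (its image contains elements with no common fixed point on \( S^{1} \)), so your \( \rho_{r} \) is not defined. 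This simultaneously undermines the commutativity of the two factors, the existence of the global fixed point \( \infty \), and hence the triviality of the resulting \( S^{q} \)-bundle. Already in codimension \( q=1 \) the suspension of \( \rho_{\Sigma} \) is the unit tangent circle bundle of \( \Sigma \), which has nonzero Euler class and is not the trivial \( S^{1} \)-bundle. If instead you try to salvage the computation by working inside \( \mathrm{Aff}^{+}(\R) \times \R^{q-1} \), the Godbillon--Vey form you pull back vanishes: the Roussarie computation you invoke uses the invariant volume \( 3 \)-form on \( PSL_{2}(\R) \), which does not restrict nontrivially to the solvable affine subgroup.

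Thurston's actual construction, as recounted in the paper, sidesteps exactly this difficulty. Rather than a group embedding, he builds a Lie algebra homomorphism \( \psi_{f} \colon \mathfrak{sl}_{2}(\R) \times \R^{q-1} \to \mathcal{X}(S^{q}) \) by hand, using the decomposition \( S^{q} = \partial(D^{2} \times D^{q-1}) \). The \( \mathfrak{sl}_{2} \)-fields act on the \( D^{2} \)-factor by a damped interpolation (via a cutoff \( g(r) \)) between the linear representation near the origin and the projective \( S^{1} \)-action near the boundary, while the \( \R^{q-1} \)-factor acts by radial fields \( Z^{i} = f^{i}\, g(r)\, r\,\partial_{r} \) depending on an auxiliary map \( f \colon S^{q-2} \to \R^{q-1} \). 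These fields commute because they are all assembled from \( g(r) r \partial_{r} \) and \( \partial_{\theta} \). The Godbillon--Vey number is then computed via the Thurston cocycle on \( \mathcal{X}(S^{q}) \) and turns out to equal a nonzero constant times the signed volume enclosed by \( f \), which can be prescribed arbitrarily. The idea you are missing is precisely this damped, non--homogeneous \( \mathfrak{sl}_{2} \)-action; without it there is no compatible action on a \emph{trivial} \( S^{q} \)-bundle with nonzero Godbillon--Vey number.
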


For such foliations, the normal bundle of \( \F_{r} \) is identified with the pullback of \( T S^{q} \), as is immediate from the triviality of the bundle \( (\Sigma \times T^{q-1}) \times S^{q} \) and the transversality of \( \F_{r} \) to fibers.
The manifold \( (\Sigma \times T^{q-1}) \times S^{q} \) is parallelizable, and for \( q = 1, 3, 7 \) the normal bundle of \( \F \) is trivial, so Theorem \ref{thm: foliations surgerable to spheres} applies as is.
For all other codimensions \( q \), the sphere \( S^{q} \) is not parallelizable, so Theorem \ref{thm: foliations surgerable to spheres} does not apply.

To treat the remaining codimensions \( q \) (which we may suppose to be at least 2), we have the following variation.
A key step of Thurston's construction is the decomposition of fiber \( S^{q} \) as the boundary of \( D^{2} \times D^{q-1} \),
but by choosing a manifold other than \( D^{q-1} \), the same construction can be made, potentially with a parallelizable fiber.
Specifically, fix a compact \( (q-1) \)-manifold \( N \) with non-empty boundary and embeddable into \( \R^{q-1} \), and let \( F \) be the smooth manifold obtained by rounding corners of \( \partial(D^{2} \times N) \).
The point then is that \( N \) can chosen so that \( F \) is parallelizable, and so one obtains foliations that may be surgered to the sphere.
For example, with \( N = D^{2} \times T^{q-3} \), one has \( F \cong S^{3} \times T^{q-3} \).
\begin{theorem}\label{thm: GV foliations general fiber}
  For \( N, F \) as just described, and each \( r \in \R \), there exists a codimension-\( q \) bundle foliation \( \F_{r} \) on \( (\Sigma \times T^{q-1}) \times F \) with Godbillon-Vey number \( GV(\F_{r}) = r \).
\end{theorem}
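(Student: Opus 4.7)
The plan is to run Thurston's construction verbatim with $D^{q-1}$ replaced by $N$ and $S^q$ replaced by $F$ throughout. The key observation is that in the original construction, $S^q$ appears only as the smoothed boundary of $D^2 \times D^{q-1}$, and $\F_r$ on $(\Sigma \times T^{q-1}) \times S^q$ is realized as the restriction of a codimension-$q$ foliation on the total space of the $D^2 \times D^{q-1}$-bundle that is transverse to fibers. This ambient foliation is constructed using a $\mathrm{PSL}(2,\R)$-action on $D^2$ (coming from the hyperbolic structure of $\Sigma$) together with translations of $T^{q-1}$ on $\R^{q-1}$, and hence is defined on the enlarged bundle with fibers $D^2 \times \R^{q-1}$. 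In particular, the foliation involves no special feature of the disk $D^{q-1}$ beyond its ambient linear structure.

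First I would restrict this enlarged foliation to $D^2 \times N$ for the given embedding $N \hookrightarrow \R^{q-1}$. The restriction remains codimension $q$ and transverse to fibers of the $(D^2 \times N)$-bundle. Taking fiberwise boundaries yields a bundle over $\Sigma \times T^{q-1}$ with fiber $\partial(D^2 \times N) = (S^1 \times N) \cup_\partial (D^2 \times \partial N)$, and the ambient foliation restricts to a bundle foliation on this boundary bundle. Next, I would smooth the corner locus $S^1 \times \partial N$ in a small fiberwise tubular neighborhood to produce the smooth fiber $F$. Because the ambient foliation is a bundle foliation and the corner neighborhood can be chosen inside a single Haefliger chart adapted to the product structure, the rounding can be performed without disturbing transversality to fibers, yielding the desired bundle foliation $\F_r$ on $(\Sigma \times T^{q-1}) \times F$.

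Finally, I would verify that $GV(\F_r) = r$. The integrand $\eta \wedge (d\eta)^q$ agrees with that of Thurston's construction on the complement of the corner-smoothing region, so up to a nonzero normalization constant depending only on the volume of $N$ (relative to $D^{q-1}$) the integral equals Thurston's computation, which surjects onto $\R$ as $r$ varies. Absorbing the constant into a reparametrization of $r$ gives $GV(\F_r) = r$ for all $r \in \R$. The contribution from the corner-smoothing region vanishes because there $\F_r$ is locally pulled back from a trivializing Haefliger chart along the fiber projection, on which the defining form $\omega$ can be chosen so that $\eta$ is exact and hence $\eta \wedge (d\eta)^q$ vanishes pointwise.

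The principal obstacle is verifying that corner rounding can be performed compatibly with the bundle foliation structure and does not alter the Godbillon-Vey integral. Both reduce to the same local check: in a tubular neighborhood of the corner locus $S^1 \times \partial N$ the foliation $\F_r$ is the pullback under the fiber projection of the foliation on $\Sigma \times T^{q-1}$ induced from Thurston's transverse structure, so the fiber direction contributes nothing new to either transversality or to $GV(\F_r)$, and smoothing affects only fiberwise trivial data.
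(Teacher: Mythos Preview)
Your proposal rests on a mistaken picture of Thurston's construction. You describe the foliation as arising from a $\mathrm{PSL}(2,\R)$-action on $D^{2}$ together with \emph{translations} of $T^{q-1}$ on an ambient $\R^{q-1}$, living on an enlarged $(D^{2}\times\R^{q-1})$-bundle that you then restrict to $D^{2}\times N$ and pass to the fiberwise boundary. That is not how the $\R^{q-1}$-factor enters. In the construction the paper recounts, the Lie algebra factor $\R^{q-1}$ is sent to vector fields on $F=\partial(D^{2}\times N)$ of the form $Z^{i}=f^{i}\,g(r)\,r\,\partial_{r}$, supported on $D^{2}\times\partial N$ and \emph{radial on the $D^{2}$ factor}, where $f=(f^{i})\colon\partial N\to\R^{q-1}$ is a freely chosen smooth map. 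There is no ambient $(D^{2}\times\R^{q-1})$-fiber being translated: translations would not preserve a compact $N\subset\R^{q-1}$, and in any event your product picture would yield a codimension-$(q{+}1)$ bundle foliation, not codimension $q$. So the ``enlarged foliation'' you propose to restrict does not exist, and the restriction/corner-rounding step never gets off the ground.

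The paper's actual argument writes down the Lie algebra map $\psi_{f}\colon\mathfrak{sl}(2,\R)\times\R^{q-1}\to\X(F)$ directly for the general fiber, using the decomposition $F=(D^{2}\times\partial N)\cup(S^{1}\times N)$: the $\mathfrak{sl}(2,\R)$-triple acts by the same tamped fields on the $D^{2}$ factor as in the $S^{q}$ case, while the basis of $\R^{q-1}$ is sent to the $Z^{i}$. One then computes the divergences against a convenient volume form on $F$ and evaluates the Thurston cocycle explicitly; the Godbillon--Vey number comes out as a nonzero constant times $\int_{\partial N}f^{*}\omega_{q-1}$, the signed volume enclosed by $f$, which is made arbitrary by varying $f$. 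In particular, the parameter achieving surjection onto $\R$ is the map $f$ on $\partial N$, not anything about the volume of $N$ as you assert, and your claim that the GV integrand ``agrees with Thurston's on the complement of the corner-smoothing region'' has no content once the action is correctly identified: the entire GV contribution is concentrated on $D^{2}\times\partial N$, precisely where the $Z^{i}$ live.
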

With the choice of \( N = D^{2} \times T^{q-3} \), we have the immediate Corollary.
\begin{corollary}\label{thm: cor GV spheres}
  For each codimension \( q \ge 1 \) and each \( r \in \R \), there exists a smooth Haefliger structure \( (S^{2q+1}, \F_{r}) \) such that \( GV(\F_{r}) = r \).
\end{corollary}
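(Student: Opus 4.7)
The plan is to apply Theorem~\ref{thm: foliations surgerable to spheres} directly to the foliations produced by Theorem~\ref{thm: GV foliations general fiber}. Taking $N = D^{2} \times T^{q-3}$, the smoothed boundary $F$ of $D^{2} \times N$ is diffeomorphic to $S^{3} \times T^{q-3}$, so Theorem~\ref{thm: GV foliations general fiber} yields for each $r \in \R$ a codimension-$q$ bundle foliation $\F_{r}$ on the $(2q+1)$-manifold
\[ M_{q} = (\Sigma \times T^{q-1}) \times S^{3} \times T^{q-3} \]
with $GV(\F_{r}) = r$.

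Next, I would verify the three hypotheses of Theorem~\ref{thm: foliations surgerable to spheres}. The dimension condition $\dim M_{q} = 2q+1 \le 2q+2$ is immediate. For stable triviality of $TM_{q}$: the factors $S^{3}$ and $T^{q-1}, T^{q-3}$ are parallelizable, while the closed orientable surface $\Sigma$ embeds as a hypersurface in $\R^{3}$ with trivial normal line bundle, so $T\Sigma \oplus \epsilon^{1}$ is trivial and $T\Sigma$ is stably trivial. It follows that $TM_{q}$ is stably trivial. For triviality of the normal bundle of $\F_{r}$: because $\F_{r}$ is a bundle foliation with leaves everywhere transverse to the fibers of the trivial bundle $M_{q} \to \Sigma \times T^{q-1}$, its normal bundle is canonically isomorphic to the pullback of $TF$ under the second-factor projection $M_{q} \to F$, and this pullback is trivial since $F = S^{3} \times T^{q-3}$ is parallelizable.

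With the hypotheses verified, Theorem~\ref{thm: foliations surgerable to spheres} produces a sequence of strong foliation surgeries taking $(M_{q}, \F_{r})$ to some $(S^{2q+1}, \F'_{r})$ while preserving all characteristic numbers; in particular $GV(\F'_{r}) = r$, as required. The only part of the argument demanding separate attention is the low-codimension cases, where $T^{q-3}$ is not defined: for $q = 1, 3, 7$ the sphere $S^{q}$ is itself parallelizable, so Thurston's original construction on $\Sigma \times T^{q-1} \times S^{q}$ already has trivial normal bundle and the same surgery argument applies verbatim. I expect the main obstacle, if full generality for $q \ge 1$ is insisted on, to be the case $q = 2$, where neither the direct fiber $S^{2}$ nor a torus fiber naturally arises from the $\partial(D^{2} \times N)$ construction with trivial normal bundle; an alternative construction of a trivially-normal codimension-$2$ Godbillon–Vey foliation on a stably parallelizable $5$-manifold would be needed before the surgery-to-sphere machinery can be invoked.
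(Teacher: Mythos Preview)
Your approach is essentially the same as the paper's, which simply states ``With the choice of \( N = D^{2} \times T^{q-3} \), we have the immediate Corollary'' and leaves the verification of hypotheses implicit; you have filled these in correctly. Your observation about \( q = 2 \) is well taken: the paper's explicit choice \( N = D^{2} \times T^{q-3} \) only makes sense for \( q \ge 3 \), and since any compact \( 1 \)-manifold with boundary embeddable in \( \R \) is a union of intervals, no choice of \( N \) in Theorem~\ref{thm: GV foliations general fiber} yields a parallelizable fiber when \( q = 2 \)---so this case is indeed not covered by the argument as written in the paper either.
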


We recall the essential points of Thurston's construction, and what modifications must be made to accommodate the more general fiber, referring to  \cite{Mizutani2017--ThurstonsConstructionSurjectiveHomomorphismH} for further details.

Let \( N = D^{q-1} \) and \( F = S^{q} \) for the immediate discussion, to be generalized below.
Give \( B = \Sigma \times T^{q-1} \) the metric of constant curvature \( -1 \) and \( 0 \) on its respective factors, and consider the homogeneous action by isometries on \( B \) of the group
\[ G = \SL(\R^{2}) \times \R^{q-1} . \]
Thurston prescribes a large family of suitable Lie algebra maps
\[ \psi_{f} \colon \sl(\R^{2}) \times \R^{q-1} \to \X(F) , \]
depending on a function 
\[ f \colon \partial N = S^{q-2} \to \R^{q-1} . \]
These integrate to group homomorphisms
\[ \Psi_{f} \colon \SL(\R^{2}) \times \R^{q-1} \to \Diff(F) , \]
and each defines a foliation on \( G \times F \) by taking the level sets of the function
\[ \begin{tikzcd}[row sep={6mm,between origins}]
    G \times F \ar[r] & F \\
    (b, p) \ar[r, mapsto] & \Psi_{f}(b) p
  \end{tikzcd} . \]
The function, and thus the foliation, is easily seen to be invariant (equivariant) for respectively the right (left) actions
\[ (b',p)\cdot b = (b' b, \Psi_{f}(b^{-1}) \cdot p) \quad \mbox{ and } \quad  b \cdot (b', p) = (b b', p) . \]
By quotient, first on the right by \( \SO(\R^{2}) \), and then on the left by a cocompact lattice, Thurston arrives at the foliation \( (\Sigma \times T^{q-1} \times F, \F_{f}) \).

The Godbillon-Vey number of a trivial-bundle foliation, such as these are, is particularly amenable to calculation, being determined by the Thurston cocycle \cite{Mizutani1988--GodbillonVeyCocycleDiffR}.
This is a Chevalley-Eilenberg cohomology \( (q+1) \)-cocycle \( \beta \) on \( \X(F) \), 
given by the formula
\[ \beta(X_{0}, \ldots, X_{q}) = \int_{F} (\div X_{0}) \d (\div X_{1}) \w \ldots \w \d (\div X_{q}) , \]
where the divergence of vector fields is given relative a choice of volume form \( \omega \) by
\[ (\div X) \omega = L_{X} \omega , \]
\( L_{X} \) the Lie derivative.
The Thurston cycle is related to the Godbillon-Vey number by the horizontal lift map of the bundle foliation,
\[ m \colon T B \to \X(F) , \]
and the formula
\begin{equation}\label{eq: GV and Thurston cocycle}
  GV(B \times F, \F) = \int_{B} m^{*}\beta .
\end{equation}

The homogeneity of the base and of the foliation \( \F_{f} \) means that \( m^{*}\beta \) need only be evaluated on a single fiber.
Thurston does this, and finds that the the integral is a positive constant times a simpler integral,
\begin{equation}\label{eq: volume of f}
  GV(B \times F, \F) = C \int_{F} f^{*} \omega_{q-1} ,
\end{equation}
where \( \omega_{q-1} \) is the volume divergence form on \( \R^{q-1} \),
\[ \omega_{q-1} = \left(\sum_{i = 0}^{q-1} x^{i} \partial_{x^{i}}\right) \lhk \d x^{1} \w \ldots \w \d x^{q-1} . \]
By the Stoke's Theorem, the integral in \eqref{eq: volume of f} equals the (signed) volume enclosed by \( f \), and this volume can clearly be chosen arbitrarily.

We now describe Thurston's Lie maps \( \psi_{f} \), as well as their generalization to other fibers \( F \).
The action of the first factor, \( \sl(\R^{2}) \to \X(F) \), proceeds through a certain action on the disk \( D^{2} \).
In representation by matrices, \( \sl(\R^{2}) \) determines a linear action on \( \R^{2} \), as well as an action on \( S^{1} \) (the action on oriented lines through \( 0 \) in \( \R^{2} \)), and Thurston's action of \( \sl(\R^{2}) \to \X(D^{2}) \) interpolates these.
Explicitly, in polar coordinates \( (r, \theta) \) on \( \R^{2} \), one may write an  \( \sl(\R^{2}) \)-triple
\begin{align*}
  Y & = \cos(\theta)\sin(\theta) g(r) r \partial_{r} - \sin^{2}(\theta) \partial_{\theta} , \\
  H & = (\cos^{2}(\theta) - \sin^{2}(\theta)) g(r) r \partial_{r} - 2\cos(\theta) \sin(\theta) \partial_{\theta}, \\
  X & = \cos(\theta)\sin(\theta) g(r) r \partial_{r} + \cos^{2} (\theta) \partial_{\theta} ,
\end{align*}
where a tamping of the radial component is accomplished by choice of a smooth function
\[ g \colon \R_{+} \to \R  \]
that is constantly equal to \( 1 \) near \( 0 \) and vanishes away from a small neighborhood of \( 0 \).
The triple determines an \( \sl(\R^{2}) \)-representation that agrees with the linear action near the origin and with the circle action near the boundary.

By assumption, there is a decomposition of \( F \) as a union of two open sets,
\begin{equation}\label{eq: F decomposition}
  F = \partial(D^{2} \times N) \cong (D^{2} \times \partial N) \cup (S^{1} \times N) ,
\end{equation}
and one can define the action \( \sl(\R^{2}) \to \X(F) \) that acts by the \( D^{2} \) action on the first factor of the first component and the circle action on the first factor of the second component.

The action of the \( \R^{q-1} \)-factor on \( \X(F) \) is where the choice of \( f \) enters.
Given smooth
\[ f = (f^{i}) \colon N \to \R^{q-1} , \]
fix the vector fields
\[ Z^{i} := f^{i} g(r) r \partial_{r}  \]
on \( D^{2} \times \partial N \).
These vanish near the boundary of \( D^{2} \times \partial N \), so can be trivially extended to \( F \).
They furthermore commute with each other and the fields \( Y, H, X \), so define the desired Lie map \( \psi_{f} \) that sends \( Y, H, X \) as above and a basis \( t^{i} \) of \( \R^{q-1} \) to \( Z^{i} \).

To compute the divergences of the vector fields it is convenient to take the following volume form on \( F \),
\[
\omega = \left\{
  \begin{array}{rl}
    G(r) \d r \w \d \theta \w \omega_{\partial N} & \mbox{on } D^{2} \times \partial N \\
    \d \theta \w \omega_{N} & \mbox{on } S^{1} \times N
  \end{array} \right.
\]
for fixed volume forms \( \omega_{\partial N}, \omega_{N} \) on \( \partial N \) and \( N \).
Here \( G \colon \R \to \R \) is the identity map near \( 0 \) and constantly \( 1 \) away from \( 0 \).
With this choice, one finds the divergences
\begin{align*}
 \div m(H) & = (h-2)\cos(2\theta) , \\
 \div m(X) & = \tfrac{1}{2}(h-2)\sin(2\theta), \\
 \div m(t^{i}) & = f^{i}h ,
\end{align*}
where
\[ h(r) := \div(g(r) r \partial_{r}) = r g' + 2 g . \]
Note that these are well defined functions because \( h \) equals \( 2 \) near \( \{0\} \times \partial N \subset D^{2} \times \partial N \)  and equals \( 0 \) outside of a small neighborhood of \( \{0\} \times \partial N \).

From this it is straightforward to calculate that
\begin{multline*}
  \beta\left(m(H), m(X), m(t^{1}), \ldots, m(t^{q-1})\right) \\
  = \pm \int_{D^{2}} \tfrac{1}{2}h^{n-2} (h-2)^{2} \left(\cos(2\theta)\right)^{2} \d h \d \theta
  \int_{\partial N} f^{*} \omega_{q-1} .
\end{multline*}
As Thurston finds, the integral over \( D^{2} \) is non-zero, and the integral over \( \partial N \) is proportional to the signed volume of an extension of \( f \) to \( N \), so can be chosen arbitrarily by varying \( f \).

\section{Unclasping the Normal Bundle}\label{sec: unclasping the normal bundle}
As seen in Section \ref{sec: foliation surgery}, a nontrivial normal bundle can lead to foliation surgery obstructions.
We wish to make clear that this obstacle is not always insurmountable, so describe a method of modifying the given foliation in one very special case.
This modification changes the obstruction class along a surgery sphere \( \varphi \), and in good cases, can eliminate it.
One could of course improve on the example here, but we leave that to future works.

The setup we suppose is this.
Let \( (M, \F) \) be a foliation of codimension \( q \), and fix a surgery attaching map \( \varphi \colon S^{q} \times 3 D^{n-q} \).
As we have seen, with these dimensions, the obstruction to surgery is determined by nontriviality of the normal bundle of \( \varphi^{*} \F \).
Now suppose the existence of a compact leaf \( L \) that intersects \( \varphi_{0}(S^{k}) \) in precisely one point.
Make the further assumption that the given leaf \( L \) has trivial holonomy.
By the Reeb stability Theorem (e.g. \cite{Candel-Conlon2000--Foliations} Theorem 2.4.1), there is a neighborhood of \( L \),
\[ N = L \times D^{q} \subset M \]
such that the foliation restricts to the trivial product foliation, with leaves given as the level sets of the function 
\[ f \colon N \to D^{q} . \]
We may regard \( f \) as a chart of the Haefliger structure \( \F \).

All of this granted, the idea is modify the Haefliger chart \( f \) to `unclasp' the normal bundle of \( \varphi^{*}\F \).
In fact, this can easily be done by the transitive action of connected sums on trivializations.
Decompose \( \varphi_{0}(S^{q}) \) into two disks, \( D = N \cap \varphi_{0}(S^{q}) \) and \( D' \) a neighborhood of \( \varphi_{0}(S^{q}) \backslash D \).
The restriction of \( \F \) to \( D \cap D' \simeq S^{q-1} \) is in the trivial concordance class, and the restriction \( D' \) determines a trivialization of its normal bundle.
By the transitive action Lemma \ref{thm: lem transitive action on trivializations}, there exists a choice of extension \( \tilde{\F} \) to \( D \) that determines the same trivialization as \( D' \).
The pullback by \( f \) of \( (D, \tilde{F}) \) gives a new Haefliger structure \( \tilde{\F} \) on \( N \), compatible with \( \F \) near the boundary.
Replacing \( f \) with this new structure, we have a Haefliger structure \( \F' \) on \( M \), for which the surgery obstruction class \( \varphi_{0}^{*}\F' \) has normal trivial bundle, and so vanishes.

Although the setup here is special, it can occur quite naturally in the setting of bundle foliations.
Suppose that \( M \to B \) is a compact bundle with fiber \( S^{q} \) and that \( \F \) is a bundle foliation on \( M \).
For a small disk in \( D \subset B \), the local trivialization \( \varphi \colon D \times S^{q} \hookrightarrow M \) determines a trivially normalized surgery sphere.
It is clear that the foliation surgery obstruction class of \( \varphi \) is nontrivial in case \( q \neq 1, 3, 7 \), because its normal bundle is isomorphic to \( T S^{q} \).
If the bundle holonomy action \( h \colon \pi_{1}(B) \to \Diff(S^{q}) \) of this foliation  has a fixed point \( p \), then the leaf \( L = L_{p} \) through \( p \) determines a section of \( M \), and so intersects the fiber \( S^{q} \) precisely once, at \( p \).
If furthermore the holonomy of this leaf is trivial, then we are in the setup just described.

Many of the classical examples of variation on characteristic classes \cites{Heitsch1978--IndependentVariationSecondaryClasses,Mizutani2017--ThurstonsConstructionSurjectiveHomomorphismH} are close to this, given essentially by prescribing the holonomy of a bundle foliation, which often has fiber the sphere.
The examples frequently have have points in the fiber that are fixed by the holonomy action.
However, in those examples, the holonomy of the leaf through that point is not typically trivial, which is why the construction given here does not apply immediately.

Finally, we remark that this procedure can be done so as to preserve characteristic numbers.
This follows from the localization of characteristic classes, the Chern-Weil map of differential graded algebras from the Gel'fand-Fuks algebra \( WO_{q} \) to differential forms on \( M \), which is defined by a choice of Bott connection and metric on the normal bundle of \( \F \).
One can choose the connection \( \nabla \) and metric \( \epsilon \) on \( \nu_{\F} \) to be flat on \( N \), so that the image forms are identically zero on \( N \).
Likewise, choose the connection \( \nabla' \) and metric \( \epsilon' \) on \( \nu_{\F'} \) flat on \( N \) and agreeing with the choices for \( \nu_{\F} \) on the complement of \( N \).
This done, the characteristic forms clearly agree, and so their characteristic numbers do as well.
Finally, a remark that \( \F' \) may have singularities, but in this case it is still possible to make `adapted' choices of the geometry \( (\nabla', \epsilon') \) such that the Chern-Weil map to forms is well defined, see \cite{Macdonald-Mcmillan2025--ChernWeilTheoryHaefligerSingularFoliations}.

\bibliography{GVspheres}
\end{document}